\documentclass[11pt]{article}
\usepackage{t1enc}
\usepackage[latin1]{inputenc}
\usepackage[english]{babel}
\usepackage{amsmath,amsthm}
\usepackage{amsfonts}
\usepackage{latexsym}
\usepackage[dvips]{graphicx}
\usepackage{graphicx}
\usepackage{mathrsfs}
\usepackage{color}
\DeclareGraphicsRule{.wmf}{bmp}{}{}
\pagestyle{plain}\raggedbottom

\newcounter{countclaim}

\date{} \textwidth 16cm \textheight 22cm \topmargin 0 cm \hoffset
-1.5 cm \voffset 0cm

\newtheorem{theorem}{Theorem}[section]
\newtheorem{lemma}[theorem]{Lemma}

\newtheorem{definition}[theorem]{Definition}
\newtheorem{corollary}[theorem]{Corollary}

\title{No mixed graph with the nullity $\eta(\widetilde{G})=|V(G)|-2m(G)+2c(G)-1$}
\author{Shengjie  He$^{\rm a}$\footnote{Corresponding author.
Emails: he1046436120@126.com (Shengjie  He), rxhao@bjtu.edu.cn (Rong-Xia Hao), hjlai@math.wvu.edu (Hong-Jian Lai), lxygqzh@tjcu.edu.cn (Qiaozhi Geng)}, Rong-Xia Hao$^{\rm b}$, Hong-Jian Lai$^{\rm c}$, Qiaozhi Geng$^{\rm a}$\\
{\small\em $^{\rm a}$School of Science, Tianjin University of Commerce, Tianjin, 300134, China}\\
{\small\em $^{\rm b}$Department of Mathematics, Beijing Jiaotong University, Beijing,
100044, China}\\
{\small\em $^{\rm c}$Department of Mathematics, West Virginia University, Morgantown, WV 26506, USA}\\
}

\begin{document}
\baselineskip 0.55cm \maketitle

\begin{abstract}
A mixed graph $\widetilde{G}$ is obtained from a simple undirected graph $G$, the underlying graph of $\widetilde{G}$,
by orienting some edges of $G$. Let $c(G)=|E(G)|-|V(G)|+\omega(G)$ be the cyclomatic number of $G$ with $\omega(G)$ the
number of connected components of $G$, $m(G)$ be the matching number of $G$, and $\eta(\widetilde{G})$ be
the nullity of $\widetilde{G}$. Chen et al. (2018)\cite{LSC} and Tian et al. (2018)\cite{TFL}  proved independently that
$|V(G)|-2m(G)-2c(G) \leq  \eta(\widetilde{G}) \leq |V(G)|-2m(G)+2c(G)$,  respectively, and they characterized
the mixed graphs with nullity attaining the upper bound and the lower bound.
In this paper,  we prove that there is no mixed graph with nullity $\eta(\widetilde{G})=|V(G)|-2m(G)+2c(G)-1$. Moreover, for fixed $c(G)$, there are infinitely many connected mixed graphs with nullity
$|V(G)|-2m(G)+2c(G)-s$ $( 0 \leq s \leq 3c(G), s\neq1 )$ is proved.
\end{abstract}

{\bf Keywords}: Mixed graph; Nullity; Matching number; Cyclomatic number.

{\bf MSC}: 05C50

\section{Introduction}

In this paper, we consider only graphs without multiedges and  loops. A simple $undirected$ $graph$ $G$ is denoted by $G=(V(G), E(G))$, where $V(G)$ is the vertex set and $E(G)$ is the edge set.
A $mixed$ $graph$ $\widetilde{G}$ is obtained by orienting some edges of $G$, where
$G$ is the underlying graph of $\widetilde{G}$.
The $Hermitian$-$adjacency$ $matrix$ of a mixed graph $\widetilde{G}$ of order $n$ is the $n \times n$ matrix $H(\widetilde{G}) = (h_{kl})$, where $h_{kl}=-h_{lk}=\mathbf{i}$ if
there is a directed edge from $v_{k}$ to $v_{l}$, where $\mathbf{i}$ is the imaginary number unit and $h_{kl}=h_{lk}=1$ if $v_{k}$ is connected to $v_{l}$ by an undirected edge, and $h_{kl}=0$ otherwise.
It is easy to see that $H(\widetilde{G})$ is a Hermitian  matrix, i.e., its conjugation and transposition is itself, that  is $H=H^{\ast}:=\overline{H}^{T}$. Thus
all its eigenvalues are real. The $positive$ $inertia$ $index$ (resp. the $negative$ $inertia$ $index$) of a mixed graph $\widetilde{G}$, denoted by $p^{+}(\widetilde{G})$ (resp. $n^{-}(\widetilde{G})$), is defined to be the number of positive eigenvalues (resp. negative eigenvalues)
of $H(\widetilde{G})$. The $rank$ of a mixed graph $\widetilde{G}$, denoted by $r(\widetilde{G})$,
is exactly the sum of $p^{+}(\widetilde{G})$ and $n^{-}(\widetilde{G})$. The $nullity$ of a mixed graph $\widetilde{G}$, denoted by $\eta(\widetilde{G})$, the algebraic multiplicity of the zero eigenvalues of $H(\widetilde{G})$. It is obviously that $\eta(\widetilde{G})=n-p^{+}(\widetilde{G})-n^{-}(\widetilde{G})$, where $n$ is the order of $\widetilde{G}$.
For a mixed cycle $\widetilde{C}$ of a mixed graph $\widetilde{G}$, the $signature$ of $\widetilde{C}$, denoted by $\sigma(\widetilde{C})$, is defined as $|f-b|$, where $f$ denotes the number of forward-oriented edges and $b$ denotes the number of backward-oriented edges of $\widetilde{C}$ without mentioning any direction.
Denote by $\widetilde{P_n}$, $\widetilde{S_n}$ and $\widetilde{C_n}$ a mixed path, mixed star and mixed cycle on $n$ vertices, respectively.
We refer to \cite{MATCH} for terminologies and notations undefined here.

For any vertex $v \in V(\widetilde{G})$, let $d_{\widetilde{G}}(v)$ (or simply $d(v)$)
denote the {\it degree} of $v$ in $\widetilde{G}$.
A vertex $v$ in $\widetilde{G}$ is called a {\it pendant vertex}
if $d(v)=1$,
and a vertex $u$ is called a {\it quasi-pendant vertex} of $v$ if $d(u)\ge 2$ and $u$ is adjacent to the pendant vertex $v$.
An $induced$ $subgraph$ $\widetilde{H}$ of $\widetilde{G}$ is a mixed graph such that the underlying graph of  $\widetilde{H}$ is an induced subgraph of the underlying graph of $\widetilde{G}$ and each edge of $\widetilde{H}$ has the same orientation (or non-orientation) as that in $\widetilde{G}$. For
$X \subseteq V(\widetilde{G})$, $\widetilde{G}-X$ is the mixed subgraph obtained from $\widetilde{G}$ by deleting all vertices in $X$ and
all incident edges. In particular, $\widetilde{G}-\{ x \}$ is usually written as $\widetilde{G}-x$ for simplicity.
For the sake of clarity, we use the notation $\widetilde{G}-\widetilde{H}$ instead of $\widetilde{G}-V(\widetilde{H})$ if $\widetilde{H}$ is an induced
subgraph of $\widetilde{G}$.

The $girth$ of the graph $G$ is the length of a shortest cycle in $G$, denoted by $g(G)$.
For an undirected $G$, the value $c(G)=|E(G)|-|V(G)|+\omega(G)$ is called the {\it cyclomatic number} of $G$,
where $\omega(G)$ is the number of connected components of $G$.
A set of pairwise independent edges of $G$ is called a $matching$, while a matching with the maximum
cardinality is a $maximum$ $matching$ of $G$. The $matching$ $number$ of $G$, denoted by $m(G)$,
is the cardinality of a maximum matching of $G$.
For a mixed graph $\widetilde{G}$, the cyclomatic number, denoted by $c(\widetilde{G})$,  and matching number, denoted by $m(\widetilde{G})$, are defined to be the cyclomatic number and
matching number of its underlying graph, respectively.
If any two cycles (if any) of $G$ share no common vertices, contracting each cycle of the graph $G$ into a vertex (called $cyclic$ $vertex$), we obtain a forest denoted by $T_{G}$.
Let $W_{G}$ be the vertex set consisting of all cyclic vertices. Denote by $[T_{G}]$ the subgraph of $T_{G}$ induced by all non-cyclic vertices.
Let $M$ be a matching a graph $G$, a vertex $u$ is said to be $M$-$saturated$ if some edge of $M$ is incident with $u$; otherwise, $u$ is said to be $M$-$unsaturated$.
A path $P$ is called an $M$-$alternating$ $path$ in $G$ if the edges of $P$ are alternately in $E(G) \setminus M$ (the set of edges belong to $E(G)$, but not $M$) and $M$. An $M$-$augmenting$ $path$ is an $M$-alternating path whose origin and terminus are $M$-unsaturated.

\begin{center}   \setlength{\unitlength}{0.7mm}
\begin{picture}(30,60)

\put(-55,30){\circle*{2}}
\put(-45,30){\circle*{2}}
\put(-43,28.5){$\cdots$}
\put(-35,30){\circle*{2}}
\put(-25,30){\circle*{2}}

\put(-55,30){\line(1,0){10}}
\put(-25,30){\line(-1,0){10}}

\put(-65,30){\circle{60}}
\put(-15,30){\circle{60}}

\put(-68,28.5){$C_{p}$}
\put(-18,28.5){$C_{q}$}

\put(-43,35){$P_{l}$}

\put(20,30){\circle*{2}}
\put(50,30){\circle*{2}}
\put(60,30){\circle*{2}}
\put(90,30){\circle*{2}}

\put(30,20){\circle*{2}}
\put(30,40){\circle*{2}}
\put(50,20){\circle*{2}}
\put(50,40){\circle*{2}}
\put(60,20){\circle*{2}}
\put(60,40){\circle*{2}}
\put(80,20){\circle*{2}}
\put(80,40){\circle*{2}}

\put(20,30){\line(1,1){10}}
\put(20,30){\line(1,-1){10}}

\put(90,30){\line(-1,1){10}}
\put(90,30){\line(-1,-1){10}}

\put(30,20){\line(1,0){20}}
\put(30,40){\line(1,0){20}}

\put(80,20){\line(-1,0){20}}
\put(80,40){\line(-1,0){20}}

\put(20,30){\line(1,0){30}}
\put(90,30){\line(-1,0){30}}

\put(52,28.5){$\cdots$}
\put(52,18.5){$\cdots$}
\put(52,38.5){$\cdots$}

\put(51.5,42.5){$P_{p+2}$}
\put(51.5,32.5){$P_{q+2}$}
\put(51.5,22){$P_{l+2}$}

\put(44,10){$\theta$-$(p, q, l)$}

\put(-51,10){$\infty$-$(p, q, l)$}

\put(-42,0){Fig. 1. The graphs $\infty$-$(p, q, l)$ and $\theta$-$(p, q, l)$}

\end{picture} \end{center}

There are two basic bicyclic graphs \cite{BIC}: $\infty$-graph and $\theta$-graph, which are depicted in Fig. 1. An $\infty$-graph, denoted by
$\infty$-$(p, l, q)$, is obtained from two vertex-disjoint cycles $C_{p}$ and $C_{q}$ by connecting some vertex of $C_{p}$ and some vertex of $C_{q}$
with a path of length $l-1$ (in the case of $l=1$, identifying the  two vertices mentioned above); and a $\theta$-graph, denoted by $\theta$-$(p, l, q)$,
is a union of three internally disjoint paths $P_{p+2}$, $P_{l+2}$, $P_{q+2}$ with common end vertices. It can be checked that any bicyclic graph can be obtained from an $\infty$-graph or a $\theta$-graph by attaching
some trees to some of its vertices.

\begin{center}   \setlength{\unitlength}{0.7mm}
\begin{picture}(100,190)

\put(-55,30){\circle*{2}}
\put(-40,40){\circle*{2}}
\put(-30,30){\circle*{2}}
\put(-40,20){\circle*{2}}
\put(-20,40){\circle*{2}}
\put(-20,30){\circle*{2}}
\put(-20,20){\circle*{2}}
\put(-10,40){\circle*{2}}
\put(-10,20){\circle*{2}}

\put(0,40){\circle*{2}}
\put(-5,30){\circle*{2}}
\put(0,20){\circle*{2}}

\put(10,40){\circle*{2}}
\put(10,20){\circle*{2}}

\put(20,40){\circle*{2}}
\put(20,20){\circle*{2}}

\put(-55,30){\line(3,2){15}}
\put(-55,30){\line(3,-2){15}}
\put(-55,30){\line(1,0){25}}
\put(-40,40){\line(1,0){10}}
\put(-40,20){\line(1,0){10}}
\put(-30,20){\circle*{2}}
\put(-30,40){\circle*{2}}

\put(-28,28.5){$\cdots$}
\put(-28,38.5){$\cdots$}
\put(-28,18.5){$\cdots$}
\put(2,38.5){$\cdots$}
\put(2,18.5){$\cdots$}
\put(-20,30){\line(1,0){15}}
\put(-20,20){\line(1,0){20}}
\put(-20,40){\line(1,0){20}}
\put(-20,20){\line(3,2){15}}
\put(-20,40){\line(3,-2){15}}
\put(20,20){\line(-1,0){10}}
\put(20,40){\line(-1,0){10}}
\put(20,40){\line(0,-1){20}}
\put(50,30){\circle*{2}}

\put(60,40){\circle*{2}}
\put(60,20){\circle*{2}}

\put(70,40){\circle*{2}}
\put(70,30){\circle*{2}}
\put(80,40){\circle*{2}}
\put(80,30){\circle*{2}}
\put(90,40){\circle*{2}}
\put(90,20){\circle*{2}}
\put(100,30){\circle*{2}}

\put(110,40){\circle*{2}}
\put(110,20){\circle*{2}}

\put(120,40){\circle*{2}}
\put(120,30){\circle*{2}}
\put(130,40){\circle*{2}}
\put(130,30){\circle*{2}}
\put(140,40){\circle*{2}}
\put(140,20){\circle*{2}}
\put(150,30){\circle*{2}}
\put(150,30){\line(-1,1){10}}
\put(150,30){\line(-1,-1){10}}
\put(150,30){\line(-1,0){20}}

\put(50,30){\line(1,1){10}}
\put(50,30){\line(1,-1){10}}
\put(50,30){\line(1,0){20}}

\put(100,30){\line(-1,1){10}}
\put(100,30){\line(1,1){10}}
\put(100,30){\line(-1,0){20}}
\put(100,30){\line(1,0){20}}

\put(60,40){\line(1,0){10}}
\put(60,20){\line(1,0){30}}
\put(140,20){\line(-1,0){30}}

\put(140,40){\line(-1,0){10}}
\put(120,40){\line(-1,0){10}}
\put(90,40){\line(-1,0){10}}

\put(72,38.5){$\cdots$}
\put(72,28.5){$\cdots$}
\put(122,38.5){$\cdots$}
\put(122,28.5){$\cdots$}
\put(92.5,18.5){$\cdots$}
\put(100.5,18.5){$\cdots$}

\put(-20,70){\circle*{2}}
\put(-20,80){\circle*{2}}
\put(-20,60){\circle*{2}}
\put(-20,90){\circle*{2}}

\put(-10,70){\circle*{2}}
\put(-10,80){\circle*{2}}
\put(-10,60){\circle*{2}}
\put(-10,90){\circle*{2}}

\put(-40,70){\circle*{2}}
\put(-30,80){\circle*{2}}
\put(-30,60){\circle*{2}}
\put(-30,90){\circle*{2}}

\put(0,80){\circle*{2}}
\put(0,60){\circle*{2}}
\put(0,90){\circle*{2}}
\put(10,70){\circle*{2}}

\put(-40,70){\line(1,0){20}}
\put(-40,70){\line(1,-1){10}}
\put(-40,70){\line(1,2){10}}
\put(-40,70){\line(1,1){10}}

\put(10,70){\line(-1,0){20}}
\put(10,70){\line(-1,-1){10}}
\put(10,70){\line(-1,2){10}}
\put(10,70){\line(-1,1){10}}

\put(0,90){\line(-1,0){10}}
\put(0,80){\line(-1,0){10}}
\put(0,60){\line(-1,0){10}}
\put(-20,90){\line(-1,0){10}}
\put(-20,80){\line(-1,0){10}}
\put(-20,60){\line(-1,0){10}}

\put(-18.5,58.5){$\cdots$}
\put(-18.5,68.5){$\cdots$}
\put(-18.5,78.5){$\cdots$}
\put(-18.5,88.5){$\cdots$}

\put(95,80){\circle*{2}}
\put(95,60){\circle*{2}}
\put(95,90){\circle*{2}}
\put(95,70){\circle*{2}}
\put(105,80){\circle*{2}}
\put(105,60){\circle*{2}}
\put(105,90){\circle*{2}}
\put(105,70){\circle*{2}}

\put(85,60){\circle*{2}}
\put(115,60){\circle*{2}}
\put(85,80){\circle*{2}}
\put(115,80){\circle*{2}}
\put(75,70){\circle*{2}}
\put(125,70){\circle*{2}}
\put(85,90){\circle*{2}}
\put(115,90){\circle*{2}}

\put(75,70){\line(1,1){10}}
\put(75,70){\line(1,0){20}}
\put(75,70){\line(1,-1){10}}

\put(125,70){\line(-1,1){10}}
\put(125,70){\line(-1,0){20}}
\put(125,70){\line(-1,-1){10}}

\put(115,90){\line(-1,0){10}}
\put(115,90){\line(0,-1){10}}
\put(85,90){\line(1,0){10}}
\put(85,90){\line(0,-1){10}}

\put(95,80){\line(-1,0){10}}
\put(95,60){\line(-1,0){10}}
\put(105,80){\line(1,0){10}}
\put(105,60){\line(1,0){10}}
\put(96.5,58.5){$\cdots$}
\put(96.5,68.5){$\cdots$}
\put(96.5,78.5){$\cdots$}
\put(96.5,88.5){$\cdots$}

\put(-50,120){\circle*{2}}
\put(-40,110){\circle*{2}}
\put(-40,130){\circle*{2}}
\put(-30,120){\circle*{2}}
\put(-30,110){\circle*{2}}
\put(-30,130){\circle*{2}}
\put(-20,120){\circle*{2}}
\put(-20,110){\circle*{2}}
\put(-20,130){\circle*{2}}

\put(-10,110){\circle*{2}}
\put(-10,130){\circle*{2}}
\put(0,120){\circle*{2}}
\put(5,120){\circle*{2}}
\put(15,120){\circle*{2}}
\put(20,120){\circle*{2}}
\put(28,120){\circle{15}}

\put(-50,120){\line(1,0){20}}
\put(-50,120){\line(1,1){10}}
\put(-50,120){\line(1,-1){10}}

\put(-40,110){\line(1,0){10}}
\put(-40,130){\line(1,0){10}}

\put(-20,110){\line(1,0){10}}
\put(-20,130){\line(1,0){10}}

\put(0,120){\line(-1,0){20}}
\put(0,120){\line(-1,1){10}}
\put(0,120){\line(-1,-1){10}}

\put(0,120){\line(1,0){5}}
\put(15,120){\line(1,0){5}}
\put(-28.5,108.5){$\cdots$}
\put(-28.5,118.5){$\cdots$}
\put(-28.5,128.5){$\cdots$}
\put(6.5,118.5){$\cdots$}

\put(70,120){\circle*{2}}
\put(80,110){\circle*{2}}
\put(80,130){\circle*{2}}
\put(90,120){\circle*{2}}
\put(90,110){\circle*{2}}
\put(90,130){\circle*{2}}
\put(100,120){\circle*{2}}
\put(100,110){\circle*{2}}
\put(100,130){\circle*{2}}

\put(110,110){\circle*{2}}
\put(110,130){\circle*{2}}
\put(120,120){\circle*{2}}

\put(70,120){\line(1,0){20}}
\put(70,120){\line(1,1){10}}
\put(70,120){\line(1,-1){10}}

\put(80,110){\line(1,0){10}}
\put(80,130){\line(1,0){10}}

\put(100,110){\line(1,0){10}}
\put(100,130){\line(1,0){10}}

\put(120,120){\line(-1,0){20}}
\put(120,120){\line(-1,1){10}}
\put(120,120){\line(-1,-1){10}}

\put(91.5,108.5){$\cdots$}
\put(91.5,118.5){$\cdots$}
\put(91.5,128.5){$\cdots$}

\put(110,110){\circle*{2}}
\put(115,110){\circle*{2}}
\put(125,110){\circle*{2}}
\put(130,110){\circle*{2}}

\put(110,110){\line(1,0){5}}
\put(125,110){\line(1,0){5}}
\put(116.5,108.5){$\cdots$}
\put(138,110){\circle{15}}

\put(-10,155){\circle{15}}
\put(-17,155){\circle*{2}}
\put(-3,155){\circle*{2}}

\put(-22,155){\circle*{2}}
\put(2,155){\circle*{2}}
\put(-32,155){\circle*{2}}
\put(12,155){\circle*{2}}

\put(-37,155){\circle*{2}}
\put(17,155){\circle*{2}}
\put(24.7,155){\circle{15}}

\put(-43.9,155){\circle{15}}
\put(17,155){\line(-1,0){5}}
\put(2,155){\line(-1,0){5}}

\put(-17,155){\line(-1,0){5}}
\put(-37,155){\line(1,0){5}}

\put(-30.5,153.5){$\cdots$}
\put(4.5,153.5){$\cdots$}

\put(100,155){\circle*{2}}

\put(105,155){\circle*{2}}
\put(95,155){\circle*{2}}
\put(100,160){\circle*{2}}

\put(115,155){\circle*{2}}
\put(85,155){\circle*{2}}
\put(100,170){\circle*{2}}

\put(120,155){\circle*{2}}
\put(80,155){\circle*{2}}

\put(100,155){\line(1,0){5}}
\put(100,155){\line(-1,0){5}}
\put(100,155){\line(0,1){5}}

\put(120,155){\line(-1,0){5}}
\put(80,155){\line(1,0){5}}

\put(107,153.5){$\cdots$}
\put(87,153.5){$\cdots$}

\put(73,155){\circle{15}}
\put(127,155){\circle{15}}
\put(100,177){\circle{15}}
\put(99.3,162.2){$\vdots$}

\put(98,13){$T_{8}$}
\put(98,53){$T_{6}$}
\put(98,103){$T_{4}$}
\put(98,146){$T_{2}$}

\put(-17,13){$T_{7}$}
\put(-17,53){$T_{5}$}
\put(-17,103){$T_{3}$}
\put(-12,142){$T_{1}$}

\put(2,0){Fig. 2. The basic tricyclic graphs $T_{1}-T_{8}$}

\end{picture} \end{center}

A $base$ of a tricyclic graph $G$ is a minimal tricyclic subgraph (i.e., containing no pendant vertex) of $G$. From \cite{TRIC}, there are eight types of bases for tricyclic graph, which are depicted in Fig. 2.
Note that any tricyclic graph $G$ can be obtained from the base of $G$ by attaching trees to some vertices of the base of $G$.

In recent years, the study on the Hermitian adjacent matrix of mixed graphs received increasing attention. In \cite{LXL}, Liu and Li investigated the properties of the coefficients of characteristic polynomials of
mixed graphs and cospectral problems among mixed graphs.
Guo and Mohar \cite{MOHAR} presented some basic properties of the rank of the mixed graphs, and many differences from the properties of eigenvalues of undirected graphs were discussed.
In \cite{WANGLONG}, Wang et al. researched the the relation among the rank, the matching number and the cyclomatic number of an undirected graph and obtained that $2m(G)-2c(G) \leq r(G) \leq 2m(G)+c(G)$.
The undirected graphs with $\eta(G)= |V(G)|-2m(G)-c(G)$ was characterized by Wang \cite{WANGL} and the undirected graphs with $\eta(G)= |V(G)|-2m(G)+2c(G)$ was characterized by Song et al. \cite{SONG}.
Chen et al. \cite{LSC} and Tian et al. \cite{TFL} studied  independently the lower and upper bounds of the rank of the mixed graphs in terms of the matching number, and the mixed graphs with rank attaining the upper bound and the lower bound were characterized, respectively.
For other related research of the adjacent matrix of a graph, one may be referred to those in \cite{GUT,HLSC,WDY,MAH2,WONGEUR,BEVI,HSJ}.

The study on the mixed graphs with fixed nullity has been a popular subject in the graph theory.
Mohar \cite{MOHAR2} characterized all the mixed graphs with rank equal to 2. Wang et al. \cite{YBJ} studied the graphs with $H$-rank 3.
Yang et al. \cite{YJL} characterize all connected mixed graphs with $H$-rank 4 (resp., 6 or 8) among all mixed graphs containing induced mixed odd cycles whose lengths are no less than 5 (resp., 7 or 9).
Li and Guo \cite{LG} proved that there is no graph with nullity $\eta(G) = |V (G)|-2m(G)+2c(G)-1$, and for fixed $c(G)$, infinitely many connected graphs with nullity $\eta(G) = |V (G)|-2m(G)+2c(G)-s$,
where $0  \leq  s  \leq 3c(G), s \neq 1$ are also constructed.
Lu and Wu \cite{LUWU} proved that there is no signed graph with nullity $\eta(G, \sigma) = |V (G)|- 2m(G)+2c(G)-1$, and for fixed $c(G)$, infinitely many connected signed graphs with nullity $\eta(G, \sigma) = |V (G)|-2m(G)+ 2c(G)-s$, where $0  \leq  s  \leq 3c(G), s \neq 1$ are also constructed.
In this paper, we prove that no mixed graph with nullity $\eta(\widetilde{G}) = |V (G)|- 2m(G)+2c(G)-1$, and for fixed $c(G)$, there are infinitely many connected mixed graphs with nullity $\eta(\widetilde{G}) = |V (G)|-2m(G)+ 2c(G)-s$, where $0  \leq  s  \leq 3c(G)$ and $s \neq 1$.

Our main results are the following Theorems \ref{T1} and \ref{T2}.

\begin{theorem}\label{T1}
Let $\widetilde{G}$ be a mixed graph. Then
$$\eta(\widetilde{G}) \neq |V(G)|-2m(G)+2c(G)-1.$$
\end{theorem}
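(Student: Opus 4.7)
I would argue by contradiction on a minimal counterexample. Suppose, toward a contradiction, that there exists a mixed graph $\widetilde{G}$ with $\eta(\widetilde{G}) = |V(G)|-2m(G)+2c(G)-1$, and choose one minimizing $|V(G)|+c(G)$. If $c(G)=0$, then $G$ is a forest and by the standard fact $\eta(\widetilde{G}) = |V(G)|-2m(G)$, the hypothesized equation would force $0=-1$, a contradiction. So henceforth $c(G)\geq 1$.

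The first reduction removes pendants. If $\widetilde{G}$ has a pendant vertex $v$ with quasi-pendant $u$, then the usual Hermitian-adjacency lemma yields $\eta(\widetilde{G})=\eta(\widetilde{G}-u-v)$, together with $m(G-u-v)=m(G)-1$ and $c(G-u-v)=c(G)$. Substituting these identities into the hypothesized equation shows $\widetilde{G}-u-v$ is itself a counterexample, with strictly smaller $|V|+c$, contradicting minimality. Hence we may assume $\widetilde{G}$ has no pendant vertex, so every vertex of $G$ has degree at least $2$ and $c(G)\geq 1$.

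The second reduction exploits a cycle. Pick any edge $e$ lying on a cycle of $\widetilde{G}$; then $c(G-e)=c(G)-1$ and $m(G-e)\in\{m(G)-1,m(G)\}$, while $|V(G-e)|=|V(G)|$. Since zeroing out the two entries of $H(\widetilde{G})$ corresponding to $e$ is a Hermitian rank-$2$ perturbation, $|\eta(\widetilde{G})-\eta(\widetilde{G}-e)|\leq 2$. Combining these with the Chen--Tian upper bound $\eta(\widetilde{G}-e)\leq |V(G-e)|-2m(G-e)+2c(G-e)$ forces $\eta(\widetilde{G}-e)$ into a very restricted set, in each case of the form $|V(G-e)|-2m(G-e)+2c(G-e)-s$ with $s\in\{0,1\}$. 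The case $s=0$ forces $\widetilde{G}-e$ to attain the upper bound, and the structural characterization of the extremal graphs from Chen~\cite{LSC} and Tian~\cite{TFL} can then be lifted back to describe $\widetilde{G}$ edge-by-edge; the case $s=1$ produces a strictly smaller counterexample, contradicting the minimality of $|V|+c$.

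The main obstacle lies in the $s=0$ subcase: from the structural description of $\widetilde{G}-e$ one must show that adding the edge $e$ back (with an arbitrary orientation/non-orientation) can never produce a mixed graph whose nullity drops by exactly $1$. This requires a careful analysis of how $e$ interacts with the characterized structure of $\widetilde{G}-e$, in particular with the mixed cycles whose signatures $\sigma(\widetilde{C})$ dictate whether a zero eigenvalue appears with multiplicity $0$ or $2$ on that cycle. Extremal mixed cycles only contribute even-dimensional null directions, and this parity phenomenon is what ultimately rules out the off-by-one value and completes the contradiction.
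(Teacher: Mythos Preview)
Your overall strategy---minimal counterexample, strip pendants, then delete something on a cycle---parallels the paper's, but there are two minor slips and one substantive gap.

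\textbf{Minor slips.} In the pendant reduction you assert $c(G-u-v)=c(G)$, but this fails when the quasi-pendant $u$ lies on a cycle; in that situation $c(G-u-v)\le c(G)-1$, and substituting yields $\eta(\widetilde{G}-u-v)\ge |V(G-u-v)|-2m(G-u-v)+2c(G-u-v)+1$, which already contradicts the upper bound (this is the content of the paper's Lemma~\ref{L883}). In the cycle step you write ``pick any edge $e$'', but your conclusion $s\in\{0,1\}$ requires $m(G-e)=m(G)$; when $m(G-e)=m(G)-1$ the same inequalities give only $s\in\{0,1,2,3\}$. Both are easily repaired (handle the on-cycle quasi-pendant separately; choose $e$ outside a fixed maximum matching).

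\textbf{The genuine gap.} The $s=0$ subcase is the whole theorem, and your ``parity phenomenon'' does not settle it. There is no abstract parity obstruction to a Hermitian rank-$2$ perturbation changing the rank by exactly $1$: with $A=\begin{pmatrix}1&0\\0&1\end{pmatrix}$ and $E=\begin{pmatrix}0&1\\1&0\end{pmatrix}$ one has $\operatorname{rank}(A+E)=1$. So you need a structural argument specific to the extremal characterization of $\widetilde{G}-e$: vertex-disjoint even cycles with $\sigma(\widetilde{C}_l)\equiv l\pmod 4$, glued along a forest obeying the matching condition $m(T_{G-e})=m([T_{G-e}])$. Reinserting $e$ can (i) create a chord on one such cycle, (ii) merge two cycles, or (iii) close a new cycle through the forest part, and in each scenario you must show the nullity cannot drop by exactly one. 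Your sketch does not engage with any of these cases.

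\textbf{How the paper closes the gap.} The paper deletes a \emph{vertex} rather than an edge, and supplies the missing ingredient as Lemma~\ref{L884}: if $\widetilde{G}$ has no pendants, $c(G)\ge 2$, $G\not\cong D$, and $\widetilde{G}$ fails the upper bound, then there \emph{exists} a cycle vertex $u$ for which $\widetilde{G}-u$ also fails the upper bound. Its proof is a case analysis on which condition of Theorem~\ref{mixed1} is violated by $\widetilde{G}$ (short girth; two cycles sharing a vertex; a bad-signature cycle; or a matching obstruction in $T_G$), and in each case exhibits such a $u$. With that lemma the $s=0$ outcome is impossible for this particular $u$, while $s=1$ is killed by the induction hypothesis after a further PED reduction. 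Your edge-deletion route would need an analogous ``there exists a good edge'' lemma or a direct case analysis of comparable depth; neither is present in your plan.
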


\begin{theorem}\label{T2}
For a fixed value $c(G)$, there are infinitely many connected mixed graphs with the nullity $\eta(\widetilde{G})=|V(G)|-2m(G)+2c(G)-k$, where $0 \leq k \leq 3c(G)$ and $k \neq 1$.
\end{theorem}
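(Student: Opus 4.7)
The plan is constructive: for each $c = c(G) \geq 0$ and each admissible $k \in \{0\}\cup\{2,3,\ldots,3c\}$, I would produce a \emph{seed} mixed graph realizing the gap $|V|-2m+2c-\eta = k$ and then inflate it into an infinite family. The inflation uses the standard pendant reduction: if $v$ is a pendant vertex of $\widetilde{G}$ with neighbor $u$, then $\eta(\widetilde{G}) = \eta(\widetilde{G}-u-v)$. Attaching a new pendant $P_2$ (two fresh vertices $u,v$ joined by edges $wu, uv$) to a vertex $w$ of a mixed graph $\widetilde{H}$ adds $2$ to $|V|$, leaves $c$ unchanged, and by the pendant lemma leaves $\eta$ unchanged. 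Choosing $w$ to be $M$-saturated for a fixed maximum matching $M$ of $\widetilde{H}$ prevents the creation of any new $M$-augmenting path, so $m$ increases by exactly $1$. Hence the gap $k$ is invariant, and iterating the attachment yields infinitely many connected mixed graphs with the same cyclomatic number and the same $k$.

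It then remains to realize every $k \in \{0\}\cup\{2,3,\ldots,3c\}$ by some seed. A single mixed cycle $\widetilde{C}_n$ already realizes the values $\{0,2,3\}$: by the standard analysis relating $\eta(\widetilde{C}_n)$ to the parity of $n$ and the value of $\sigma(\widetilde{C}_n)\pmod 4$, one has $\eta(\widetilde{C}_n)\in\{0,2\}$ for $n$ even and $\eta(\widetilde{C}_n)\in\{0,1\}$ for $n$ odd, while $|V|-2m+2c$ equals $2$ or $3$ respectively, so each of $k\in\{0,2,3\}$ is attained by an appropriately oriented cycle. For general $c$, I would take $c$ pairwise vertex-disjoint mixed cycles, each with chosen parity and signature, and join them into a connected mixed graph $\widetilde{G}_k$ by a tree attached to each cycle at an $M$-saturated vertex. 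The key claim is that the gap of $\widetilde{G}_k$ equals $k_1+\cdots+k_c$, where each $k_i\in\{0,2,3\}$ is the contribution of the $i$-th cycle. Since every $k\in\{0\}\cup\{2,3,\ldots,3c\}$ admits such a representation — and $k=1$ does not, consistently with Theorem~\ref{T1} — this library of seeds suffices, and pendant $P_2$-attachment then yields the required infinite family.

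The main obstacle is the decomposition step: one must design the connecting tree so that repeated applications of the pendant lemma strip the tree off without perturbing either the individual cycles or the matching number. The natural recipe is to connect the cycles by paths whose internal edges alternate with respect to a fixed maximum matching and whose terminal edges are matched into the cycles, so that each pendant-reduction step removes one tree edge together with both its endpoints, decreases $m$ by exactly $1$, and leaves $\eta$ and $c$ unchanged. Verifying that no auxiliary $M$-augmenting path is introduced across the tree, and that successive pendant reductions terminate at the disjoint union of the $c$ original mixed cycles, is the delicate bookkeeping that forces $\eta(\widetilde{G}_k) = \sum_{i=1}^{c}\eta(\widetilde{C}_{n_i})$ and therefore yields the additive formula for $k$.
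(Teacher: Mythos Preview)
Your strategy is essentially the paper's: write each admissible $k$ as $\sum_{i=1}^{c} k_i$ with $k_i\in\{0,2,3\}$, realize each $k_i$ by a small unicyclic gadget, glue the gadgets together, and read off the nullity by pendant reduction. The paper's gluing, however, is much simpler than the general tree you propose and completely bypasses the ``delicate bookkeeping'' you flag as the main obstacle. It takes a star $K_{1,c+1}$, identifies $c$ of its leaves with one vertex from each gadget, and leaves the $(c{+}1)$-st leaf free; a single application of Lemma~\ref{L13} (remove that free leaf together with the center $u$) already decomposes $\widetilde{G}$ into the disjoint union of the $c$ gadgets, so no alternating-path analysis is needed at all. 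Note that your sketch, as written, has no pendant vertex to trigger the reduction, since every leaf of your connecting tree is identified with a cycle vertex; the extra free leaf is exactly what fixes this. The paper's three gadgets are a triangle with even signature (gap $3$), a $4$-cycle with $\sigma\equiv 0\pmod 4$ (gap $0$), and a $4$-cycle with one pendant edge (gap $2$); your bare-cycle variants for gaps $0,2,3$ are equally valid. Finally, your pendant-$P_2$ inflation at an $M$-saturated vertex is correct and actually more complete than the paper's argument, which exhibits only one seed per $k$ and leaves the ``infinitely many'' implicit.
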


The rest of this paper is organized as follows. In Section 2, some useful lemmas are listed which will be used in the proof of our main results.
The proof of the Theorem \ref{T1} is presented in Section 3. In Section 4, the proof for Theorem \ref{T2} is given.

\section{Preliminaries}

We need the following known results and useful lemmas to prove our main results, which will be needed in next sections.

\begin{lemma} \label{L16}{\rm\cite{MOHAR2}}
Let $\widetilde{G}$ be a mixed graph.

{\em(i)} If $\widetilde{H}$ is an induced subgraph of $\widetilde{G}$, then $r(\widetilde{H}) \leq r(\widetilde{G})$.

{\em(ii)} If $\widetilde{G_{1}}, \widetilde{G_{2}}, \cdots, \widetilde{G_{t}}$ are the connected components of $\widetilde{G}$, then $r(\widetilde{G})=\sum_{i=1}^{t}r(\widetilde{G_{t}})$.

{\em(iii)}  $r(\widetilde{G})\geq 0$, where equality if and only if $\widetilde{G}$ is an empty graph.

\end{lemma}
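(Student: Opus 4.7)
The plan is to derive all three statements from the basic linear-algebra structure of the Hermitian adjacency matrix $H(\widetilde{G})$, relying on the fact that $H(\widetilde{G})$ is Hermitian (so its rank equals the total number of nonzero eigenvalues, i.e.\ $r(\widetilde{G})=p^{+}(\widetilde{G})+n^{-}(\widetilde{G})$) and that its off-diagonal entries record exactly the edges of $\widetilde{G}$ (each undirected edge contributing a $\pm1$, each directed edge contributing $\pm \mathbf{i}$, and non-edges contributing $0$).

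For part (i), I will exploit the fact that if $\widetilde{H}$ is an induced subgraph of $\widetilde{G}$, then by the definition of induced mixed subgraph (same orientations as in $\widetilde{G}$) the matrix $H(\widetilde{H})$ is precisely the principal submatrix of $H(\widetilde{G})$ indexed by $V(\widetilde{H})$. The Cauchy interlacing theorem for Hermitian matrices then gives $p^{+}(H(\widetilde{H}))\le p^{+}(H(\widetilde{G}))$ and $n^{-}(H(\widetilde{H}))\le n^{-}(H(\widetilde{G}))$, whence $r(\widetilde{H})\le r(\widetilde{G})$. (Equivalently, one can argue by deleting one row/column at a time and noting that the rank drops by at most one at each step, but interlacing is cleaner.)

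For part (ii), I would observe that after arranging the vertices of $\widetilde{G}$ so that the vertices of each component are grouped together, $H(\widetilde{G})$ becomes a block diagonal matrix whose diagonal blocks are exactly $H(\widetilde{G_{1}}),\dots,H(\widetilde{G_{t}})$, because there are no edges — hence no nonzero entries of $H(\widetilde{G})$ — between distinct components. Since the rank of a block diagonal matrix is the sum of the ranks of its blocks, the identity $r(\widetilde{G})=\sum_{i=1}^{t}r(\widetilde{G_{i}})$ follows at once.

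For part (iii), non-negativity is immediate since rank is always a non-negative integer. For the equality case I will argue that $r(\widetilde{G})=0$ if and only if $H(\widetilde{G})$ is the zero matrix; the diagonal of $H(\widetilde{G})$ is already zero, and an off-diagonal entry $h_{kl}$ vanishes precisely when $v_{k}v_{l}\notin E(G)$, so the matrix is zero iff $E(G)=\varnothing$, i.e.\ iff $\widetilde{G}$ is an empty (edgeless) graph. None of the three parts presents a real obstacle; the only point requiring care is interpreting "empty graph" in the sense standard in this literature (no edges, possibly on a nonempty vertex set), which is forced by the definition of $H(\widetilde{G})$.
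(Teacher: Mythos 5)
Your three arguments are all correct: $H(\widetilde{H})$ is indeed the principal submatrix of $H(\widetilde{G})$ indexed by $V(\widetilde{H})$ (so either interlacing or the even simpler fact that a submatrix has rank at most that of the full matrix gives (i)), the block-diagonal decomposition gives (ii), and the zero-matrix characterization gives (iii). The paper states this lemma as a cited result from Mohar's paper and supplies no proof of its own, so there is nothing to compare against; your proof is the standard one.
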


\begin{lemma} \label{L131}{\rm\cite{YBJ}}
Let $\widetilde{T}$ be a mixed tree. Then $r(\widetilde{T})=2m(T)$.
\end{lemma}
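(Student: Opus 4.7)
The plan is to induct on $|V(T)|$. For the base case $|V(T)|\le 1$, the matrix $H(\widetilde{T})$ is the zero matrix (or is empty), so both $r(\widetilde{T})$ and $2m(T)$ equal $0$. For the inductive step, let $P$ be a longest path of $T$; then either endpoint $v$ of $P$ is pendant, and let $u$ denote its unique neighbor. The heart of the proof is a pendant--reduction identity
$$r(\widetilde{T}) \;=\; r(\widetilde{T} - \{u,v\}) + 2.$$

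To establish this identity, list the rows and columns of $H(\widetilde{T})$ with $v,u$ first, so that the matrix has the block form
$$H(\widetilde{T}) \;=\; \begin{pmatrix} 0 & h_{vu} & 0 \\ \overline{h_{vu}} & 0 & b^{*} \\ 0 & b & C \end{pmatrix},$$
where $C = H(\widetilde{T}-\{u,v\})$ and $|h_{vu}|=1$ because $vu$ is an edge of a mixed graph. Because $v$ is pendant, the only nonzero entry in row $v$ sits in column $u$. Hence, for every vertex $w\notin\{u,v\}$ with $h_{wu}\neq 0$, the elementary operation ``replace row $w$ by row $w$ minus $(h_{wu}/h_{vu})\cdot$ row $v$'' zeroes out the $(w,u)$ entry without altering any entry of $w$'s row that lies inside $C$; the conjugate column operations then zero out the $(u,w)$ entries without altering $C$ either. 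After all such pivots we are left with the direct sum of $\left(\begin{smallmatrix}0 & h_{vu}\\ \overline{h_{vu}} & 0\end{smallmatrix}\right)$, which has rank $2$, and the block $C$. This gives the claimed identity.

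The rest is routine. The graph $\widetilde{T}-\{u,v\}$ is a disjoint union of mixed trees, each strictly smaller than $\widetilde{T}$, so the inductive hypothesis together with Lemma~\ref{L16}(ii) yields $r(\widetilde{T}-\{u,v\})=2m(T-\{u,v\})$. A standard pendant argument gives $m(T)=m(T-\{u,v\})+1$: adjoining $uv$ to a maximum matching of $T-\{u,v\}$ proves $m(T)\ge m(T-\{u,v\})+1$, while any matching of $T$ uses at most one edge at the pendant vertex $v$, so deleting that edge (if present) leaves a matching of $T-\{u,v\}$ of size at least $m(T)-1$. Combining these three facts yields $r(\widetilde{T}) = 2m(T-\{u,v\})+2 = 2m(T)$, completing the induction.

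The main obstacle is the pendant--reduction identity, since everything else is standard tree induction plus Lemma~\ref{L16}(ii). That identity works precisely because every edge of a mixed graph contributes a Hermitian entry of modulus one, so pivoting on $h_{vu}$ is numerically clean and, together with the Hermitian symmetry, ensures that the surviving principal submatrix is literally $H(\widetilde{T}-\{u,v\})$ rather than some Schur-complement perturbation of it.
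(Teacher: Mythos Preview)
Your proof is correct. The paper does not supply its own proof of this lemma; it is quoted from \cite{YBJ} as a known result, so there is no in-paper argument to compare against. Your pendant--reduction identity $r(\widetilde{T})=r(\widetilde{T}-\{u,v\})+2$ is exactly the rank reformulation of Lemma~\ref{L13} (also cited from \cite{YBJ}), and your matching identity $m(T)=m(T-\{u,v\})+1$ is Lemma~\ref{L19}; the induction you run is the standard way these two facts combine to give $r(\widetilde{T})=2m(T)$. The explicit row/column elimination you give is a clean, self-contained justification of the pendant step and is precisely how such lemmas are typically proved in the source literature. One cosmetic remark: invoking a longest path is unnecessary, since any tree on at least two vertices has a pendant vertex; but this does no harm.
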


\begin{lemma} \label{L17}{\rm\cite{LSC}}
Let $G$ be a simple undirected graph.
Then $m(G)-1 \leq m(G-v) \leq m(G)$ for any vertex $v \in V(G)$.
\end{lemma}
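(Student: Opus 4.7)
The plan is to prove the two inequalities separately, each by a direct construction that exploits the combinatorial definition of a matching and nothing more. No sophisticated matching theory (no Berge's theorem, no König) is needed; the whole statement is foundational, so the goal is just to be careful about the case split on whether $v$ is saturated by the chosen matching.

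For the upper bound $m(G-v)\le m(G)$, I would take any maximum matching $M'$ of $G-v$. Since $V(G-v)\subseteq V(G)$ and $E(G-v)\subseteq E(G)$, the set $M'$ is automatically a set of pairwise non-adjacent edges of $G$, hence a matching in $G$. Therefore $m(G)\ge |M'|=m(G-v)$.

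For the lower bound $m(G)-1\le m(G-v)$, I would take a maximum matching $M$ of $G$ and split into two cases according to whether $v$ is $M$-saturated. If $v$ is $M$-unsaturated, then no edge of $M$ is incident with $v$, so $M\subseteq E(G-v)$ and $M$ is still a matching after deleting $v$, giving $m(G-v)\ge |M|=m(G)\ge m(G)-1$. If $v$ is $M$-saturated, then exactly one edge $e\in M$ is incident with $v$ (uniqueness follows from $M$ being a matching), and $M\setminus\{e\}$ consists of edges disjoint from $v$, hence is a matching in $G-v$ of size $m(G)-1$, yielding $m(G-v)\ge m(G)-1$.

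There is no real obstacle; the only thing to be careful about is the uniqueness of the edge of $M$ incident to $v$ in the saturated case, which is immediate from the definition of a matching, and the observation that a subset of a matching is a matching. Combining the two bounds yields $m(G)-1\le m(G-v)\le m(G)$, which is exactly the claim.
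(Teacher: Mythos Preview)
Your argument is correct and complete; this is the standard elementary proof of this basic inequality. Note that the paper does not actually supply a proof of this lemma: it is stated with a citation to \cite{LSC} and used as a known preliminary result, so there is no ``paper's own proof'' to compare against.
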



\begin{lemma} \label{L19}{\rm\cite{WDY}}
Let $x$ be a pendant vertex of $G$ and $y$ be the neighbour of $x$. Then $m(G)=m(G-y)+1=m(G- \{ x, y \})+1$.
\end{lemma}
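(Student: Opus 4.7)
The statement to prove is Lemma~2.4, which asserts that if $x$ is a pendant vertex of $G$ with unique neighbour $y$, then $m(G) = m(G-y)+1 = m(G-\{x,y\})+1$. The plan is to reduce both equalities to a single inequality via a simple observation, then handle the remaining inequality by an augmenting-style exchange argument.

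\textbf{Reduction step.} The first thing I would notice is that since $y$ is the only neighbour of $x$ in $G$, deleting $y$ leaves $x$ as an isolated vertex of $G-y$. Isolated vertices are never incident to edges, so they cannot occur in any matching, and hence $m(G-y) = m(G-\{x,y\})$. This immediately reduces the lemma to proving the single identity $m(G) = m(G-\{x,y\}) + 1$.

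\textbf{Lower bound.} For $m(G) \ge m(G-\{x,y\})+1$, take any maximum matching $M_0$ of $G-\{x,y\}$. Since $x$ and $y$ are absent from $G-\{x,y\}$, the edge $xy$ of $G$ shares no endpoint with any edge of $M_0$. Hence $M_0 \cup \{xy\}$ is a valid matching of $G$ of size $m(G-\{x,y\})+1$, giving the desired inequality.

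\textbf{Upper bound.} For $m(G) \le m(G-\{x,y\})+1$, I would use a standard exchange argument to show that some maximum matching $M$ of $G$ contains the edge $xy$. Start with any maximum matching $M$ of $G$. If $xy \in M$, we are done. Otherwise, $x$ is $M$-unsaturated (since $xy$ is the only edge incident to $x$); if $y$ were also $M$-unsaturated, then $M \cup \{xy\}$ would be a larger matching, contradicting the maximality of $M$. So $y$ is saturated by some edge $yz \in M$, and replacing $yz$ by $xy$ yields a maximum matching $M' := (M \setminus \{yz\}) \cup \{xy\}$ containing $xy$. Now $M' \setminus \{xy\}$ is a matching of $G - \{x,y\}$ of size $m(G)-1$, so $m(G-\{x,y\}) \ge m(G)-1$. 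Combining with the lower bound yields equality.

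\textbf{Main obstacle.} There is no genuine obstacle here; the lemma is a standard consequence of the augmenting-path structure of maximum matchings. The only point that requires a hair of care is the exchange argument establishing that some maximum matching contains the pendant edge $xy$, and in particular the observation that the exchange $yz \mapsto xy$ preserves both maximality and the matching property (which it does, since $x$ has no other neighbour to conflict with). Once this is in hand, the two displayed equalities follow immediately.
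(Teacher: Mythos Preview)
Your proof is correct and entirely standard. Note, however, that the paper does not actually prove this lemma: it is quoted from reference~\cite{WDY} without proof, so there is no in-paper argument to compare against. Your reduction via the isolated-vertex observation and the exchange argument showing some maximum matching contains the pendant edge $xy$ are exactly the expected elementary steps.
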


\begin{lemma}\label{MATCH}{\rm\cite{MATCH}}
A matching $M$ of graph $G$ is a maximum matching if and only if $G$ contains no $M$-augmenting path.
\end{lemma}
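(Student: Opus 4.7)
The plan is to establish Berge's characterization by proving both implications of the biconditional via the contrapositive. The tools needed are elementary: an edge-swap along an augmenting path for the forward direction, and the symmetric-difference argument for the backward direction.

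For the forward direction, I would prove the contrapositive: if $G$ has an $M$-augmenting path $P$, then $M$ is not maximum. Since $P$ is $M$-alternating with both endpoints $M$-unsaturated, the edges of $P$ alternate starting and ending with edges in $E(G) \setminus M$, so $P$ has one more edge outside $M$ than inside $M$. Define $M' = \bigl(M \setminus E(P)\bigr) \cup \bigl(E(P) \setminus M\bigr)$. The only vertices whose incidences have changed lie on $P$, and the alternating structure together with the unsaturated endpoints ensures that every vertex of $G$ is still incident to at most one edge of $M'$, so $M'$ is a matching; moreover $|M'| = |M| + 1$, contradicting the maximality of $M$.

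For the backward direction, I would also argue by contrapositive. Suppose $M$ is not maximum, and let $M^{\ast}$ be a matching with $|M^{\ast}| > |M|$. Form the spanning subgraph $H$ of $G$ with edge set $M \triangle M^{\ast}$. Because $M$ and $M^{\ast}$ are each matchings, every vertex of $H$ has degree at most $2$; consequently each connected component of $H$ is either a path or a cycle whose edges alternate between $M$ and $M^{\ast}$. Any cycle component must be even and contributes equally many edges from $M$ and $M^{\ast}$; a path component contributes equal numbers unless its first and last edges both belong to the same matching. Since $|M^{\ast} \cap E(H)| > |M \cap E(H)|$, at least one component must be a path whose first and last edges both lie in $M^{\ast} \setminus M$. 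This path is $M$-alternating, and its endpoints cannot be $M$-saturated, for the unique $M$-edge at such an endpoint would itself lie in $E(H)$ and would extend the component. Hence this path is an $M$-augmenting path in $G$.

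The main subtle point is the combinatorial analysis inside $H$: verifying that the components are precisely alternating paths and even cycles, and that the counting inequality $|M^{\ast} \cap E(H)| > |M \cap E(H)|$ forces at least one component to be a path whose extreme edges both belong to $M^{\ast}$. Once the component structure is pinned down, the identification of such a path with an $M$-augmenting path of $G$ is immediate. No advanced machinery beyond basic matching vocabulary from the preliminaries (notably the definitions of $M$-saturated vertex and $M$-alternating path already introduced in the excerpt) is required.
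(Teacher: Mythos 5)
Your proof is correct and is the standard symmetric-difference proof of Berge's theorem; the paper does not prove this lemma at all but cites it to Bondy and Murty, where essentially this same argument appears, so there is nothing to contrast. The only point worth tightening is your justification that the endpoints of the surplus path are $M$-unsaturated: you should note explicitly that an $M$-edge at such an endpoint cannot lie in $M \cap M^{\ast}$ (otherwise that endpoint would meet two edges of $M^{\ast}$, namely this edge and the terminal path edge from $M^{\ast}\setminus M$), hence it lies in $M \triangle M^{\ast}$ and would indeed extend the component, as you claim.
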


\begin{lemma} \label{L13}{\rm\cite{YBJ}}
Let $x$ be a pendant vertex of $\widetilde{G}$ and $y$ be the neighbour of $x$. Then
$$\eta(\widetilde{G})=\eta(\widetilde{G}-x-y).$$
\end{lemma}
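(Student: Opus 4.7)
The plan is to work directly with the Hermitian adjacency matrix $H=H(\widetilde{G})$. Order the vertices so that $x$ comes first, $y$ second, and let $W=V(\widetilde{G})\setminus\{x,y\}$. Because $x$ is pendant with unique neighbor $y$, the $x$-row of $H$ has a single nonzero entry $h_{xy}\in\{1,\mathbf{i},-\mathbf{i}\}$, in the $y$-column, and the $x$-column is its conjugate. Hence $H$ admits the block form
$$
H=\begin{pmatrix} 0 & h_{xy} & \mathbf{0}^{T}\\ \overline{h_{xy}} & 0 & \beta^{*}\\ \mathbf{0} & \beta & H_{W} \end{pmatrix},
$$
where $\beta\in\mathbb{C}^{|W|}$ collects the entries between $y$ and $W$ and $H_{W}=H(\widetilde{G}-x-y)$.

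My preferred route is a direct null-space analysis. For $v=(v_{x},v_{y},v_{W})^{T}\in\ker H$, the first block row forces $h_{xy}v_{y}=0$, hence $v_{y}=0$. The third block row then reduces to $H_{W}v_{W}=0$, placing $v_{W}\in\ker H_{W}$. The middle block row $\overline{h_{xy}}\,v_{x}+\beta^{*}v_{W}=0$ determines $v_{x}=-\beta^{*}v_{W}/\overline{h_{xy}}$ uniquely from $v_{W}$. Reversing the argument, each $v_{W}\in\ker H_{W}$ lifts to a unique element of $\ker H$ by these same relations, so $v\mapsto v_{W}$ is a linear isomorphism $\ker H\to\ker H_{W}$ and therefore $\eta(\widetilde{G})=\eta(\widetilde{G}-x-y)$.

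An equivalent and more pictorial route is via a Hermitian congruence: subtract $\beta_{i}/h_{xy}$ times the $x$-row from each $W_{i}$-row, and simultaneously $\overline{\beta_{i}/h_{xy}}$ times the $x$-column from each $W_{i}$-column, for each $i$. These paired operations zero out both off-diagonal $\beta$-blocks without touching $H_{W}$ or the $2\times2$ corner on $\{x,y\}$, producing a block-diagonal matrix whose two blocks are the nonsingular Hermitian matrix $\bigl(\begin{smallmatrix}0 & h_{xy}\\ \overline{h_{xy}} & 0\end{smallmatrix}\bigr)$ and $H_{W}$. Since congruence preserves rank (and in fact the whole inertia), we again obtain $r(\widetilde{G})=r(\widetilde{G}-x-y)+2$, which is equivalent to the claim since the number of vertices drops by exactly $2$.

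The only real subtlety, and the one step I expect to be the single nontrivial point, is the bookkeeping of conjugation: because $h_{xy}$ may equal $\pm\mathbf{i}$ rather than $1$, the scalar multiplied into the rows and the one multiplied into the columns must be complex conjugates of each other, so that the composite operation is a genuine $*$-congruence $P^{*}HP$ preserving the Hermitian structure rather than an arbitrary equivalence. Once this is honored the argument is purely linear-algebraic, and the pendant hypothesis is used exactly where needed: it is precisely what guarantees that the $x$-row and $x$-column have a single nonzero entry, which is what permits the decoupling of $\{x,y\}$ from $H_{W}$.
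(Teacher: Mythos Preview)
Your argument is correct. The paper does not supply its own proof of this lemma; it is quoted from \cite{YBJ} as a known result, so there is no ``paper's proof'' to compare against beyond noting that the standard proof in the literature is exactly the block reduction you give. Both of your routes are valid: the null-space bijection $v\mapsto v_{W}$ is clean and immediately yields $\dim\ker H=\dim\ker H_{W}$, while the $*$-congruence version gives the slightly stronger statement $r(\widetilde{G})=r(\widetilde{G}-x-y)+2$ (and, since you preserve the Hermitian form, even the full inertia $p^{+}(\widetilde{G})=p^{+}(\widetilde{G}-x-y)+1$, $n^{-}(\widetilde{G})=n^{-}(\widetilde{G}-x-y)+1$). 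Your remark about conjugation is exactly the point that distinguishes the mixed case from the undirected one, and you handle it correctly: the column multiplier must be $\overline{\beta_{i}/h_{xy}}$ so that the combined operation is $P^{*}HP$ rather than $P^{T}HP$.
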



\begin{lemma} \label{L0001}{\rm\cite{MOHAR2}}
Let $x$ be a vertex of a mixed graph $\widetilde{G}$.
Then $\eta(\widetilde{G})-1 \leq \eta (\widetilde{G}-x) \leq \eta(\widetilde{G})+1$.
\end{lemma}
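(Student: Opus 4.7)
The plan is to reduce the statement to the Cauchy interlacing theorem for Hermitian matrices. Since $\widetilde{G}-x$ is obtained from $\widetilde{G}$ by deleting the vertex $x$ together with its incident edges, the matrix $H(\widetilde{G}-x)$ is precisely the $(n-1)\times(n-1)$ principal submatrix of $H(\widetilde{G})$ obtained by deleting the row and the column indexed by $x$. Both matrices are Hermitian, so all their eigenvalues are real and the classical interlacing theorem applies.

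Concretely, let $\lambda_1\ge\lambda_2\ge\cdots\ge\lambda_n$ be the eigenvalues of $H(\widetilde{G})$ and $\mu_1\ge\mu_2\ge\cdots\ge\mu_{n-1}$ those of $H(\widetilde{G}-x)$. The interlacing inequalities $\lambda_i\ge\mu_i\ge\lambda_{i+1}$ for $1\le i\le n-1$ immediately give the two one-sided bounds $p^+(\widetilde{G})-1\le p^+(\widetilde{G}-x)\le p^+(\widetilde{G})$ and $n^-(\widetilde{G})-1\le n^-(\widetilde{G}-x)\le n^-(\widetilde{G})$. Indeed, a positive eigenvalue of $H(\widetilde{G}-x)$ forces a positive eigenvalue of $H(\widetilde{G})$ (by the left interlacing inequality), and conversely at most one positive $\lambda_i$ can fail to force a positive $\mu_i$ (by the right inequality); the same reasoning, applied to $-H$, handles the negative inertia.

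Finally, I would combine these inertia bounds with the identity $\eta(\widetilde{G})=n-p^+(\widetilde{G})-n^-(\widetilde{G})$. The upper estimate $p^+(\widetilde{G}-x)\le p^+(\widetilde{G})$ and $n^-(\widetilde{G}-x)\le n^-(\widetilde{G})$ yield
\[
\eta(\widetilde{G}-x)=(n-1)-p^+(\widetilde{G}-x)-n^-(\widetilde{G}-x)\ge (n-1)-p^+(\widetilde{G})-n^-(\widetilde{G})=\eta(\widetilde{G})-1,
\]
while the lower estimates $p^+(\widetilde{G}-x)\ge p^+(\widetilde{G})-1$ and $n^-(\widetilde{G}-x)\ge n^-(\widetilde{G})-1$ give the companion inequality $\eta(\widetilde{G}-x)\le \eta(\widetilde{G})+1$.

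There is no real obstacle here beyond invoking Cauchy interlacing; the only item that merits a sentence of verification is that deleting a vertex from a mixed graph corresponds exactly to deleting the matching row and column of the Hermitian adjacency matrix (which is immediate from the definition $h_{kl}=-h_{lk}=\mathbf{i}$ or $h_{kl}=h_{lk}=1$ or $0$, since these entries depend only on the pair of vertices $v_k,v_l$ and the orientation of the edge joining them, both of which are preserved when passing to induced subgraphs).
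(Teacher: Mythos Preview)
Your argument via Cauchy interlacing is correct and is the standard route to this inequality. Note, however, that the paper does not supply its own proof of this lemma: it is quoted directly from Mohar~\cite{MOHAR2} as a known result, so there is no in-paper argument to compare against. Your derivation is exactly how one would justify the cited statement.
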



\begin{lemma} \label{L23}{\rm\cite{WDY}}
Let $G$ be a graph with $x \in V(G)$. Then

{\em(i)} $c(G)=c(G-x)$ if $x$ is not lying in a cycle of $G$;

{\em(ii)} $c(G-x) \leq c(G)-1$ if $x$ lies in a cycle of $G$;

{\em(iii)} $c(G-x) \leq c(G)-2$ if $x$ is a common vertex of distinct cycles of $G$.
\end{lemma}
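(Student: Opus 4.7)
The plan is to obtain a closed-form expression for $c(G)-c(G-x)$ from the defining identity $c(G)=|E(G)|-|V(G)|+\omega(G)$ and then control the change in the number of components of $G-x$ in terms of the local cycle structure at $x$. Writing $d=d_{G}(x)$ and, when $x$ is not isolated, letting $k$ be the number of equivalence classes of $N(x)$ under the relation ``$u$ and $v$ lie in the same component of $G-x$'', a direct substitution gives $\omega(G-x)-\omega(G)=k-1$ and hence $c(G)-c(G-x)=d-k$. The isolated case is immediate, so each of (i)--(iii) reduces to an upper bound on $k$.

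For part (i), if $x$ lies on no cycle then any two distinct neighbours of $x$ must belong to different components of $G-x$, since a $G-x$-path joining them together with the length-two path through $x$ would yield a cycle containing $x$; thus $k=d$ and $c(G)=c(G-x)$. For part (ii), if $x$ lies on a cycle $C$ then the two neighbours of $x$ on $C$ are linked in $G-x$ by the path $C-x$, so at least one pair of neighbours collapses; hence $k\leq d-1$ and $c(G-x)\leq c(G)-1$.

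For part (iii), I would take two distinct cycles $C_{1}$ and $C_{2}$ through $x$ and denote by $\{a_{i},b_{i}\}\subseteq N(x)$ the pair of neighbours of $x$ used by $C_{i}$. A short case analysis on $|\{a_{1},b_{1}\}\cap\{a_{2},b_{2}\}|$ handles the two generic configurations: disjoint pairs give two independent identifications $a_{1}\sim b_{1}$ and $a_{2}\sim b_{2}$; a single common neighbour pulls three distinct neighbours of $x$ into one class. Both deliver $k\leq d-2$, hence $c(G-x)\leq c(G)-2$. The delicate step---and the main obstacle---is the borderline configuration in which $C_{1}$ and $C_{2}$ use the same edge-pair at $x$, so that $x$ is a degree-two pass-through vertex inside a theta-like subgraph. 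In that configuration no further merge of neighbours in $N(x)$ arises, and the second required unit of decrease in $c$ must instead be extracted from the additional cycle of $G-x$ that the two distinct $a_{1}$-$b_{1}$ paths $C_{1}-x$ and $C_{2}-x$ create; reconciling this with the reading of ``common vertex of distinct cycles'' intended in \cite{WDY} (under which this pass-through configuration is excluded, equivalently $C_{1}$ and $C_{2}$ use different edge-pairs at $x$) is the subtle point of the argument.
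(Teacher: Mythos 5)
The paper offers no proof of this lemma --- it is quoted from \cite{WDY} --- so there is nothing internal to compare your argument against; I can only assess it on its own terms. Your reduction is the right one: writing $d=d_G(x)$ and letting $k$ be the number of components of $G-x$ that meet $N(x)$, every vertex of the component of $G$ containing $x$ reaches some neighbour of $x$ by a path avoiding $x$, so $\omega(G-x)=\omega(G)-1+k$ and hence $c(G)-c(G-x)=d-k$. Your deductions of (i) (here $k=d$, since a reconnecting path would close a cycle through $x$) and of (ii) (here $k\le d-1$ via the path $C-x$) are complete and correct.

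For (iii) you have correctly put your finger on the real issue, but you should draw the conclusion more bluntly: in your borderline configuration the statement is \emph{false} as written, and the second unit of decrease cannot be ``extracted'' from anywhere. The paper's own graph $D$ of Fig.~3, which is $\theta$-$(1,1,1)\cong K_{2,3}$, is a counterexample: any vertex $x$ of degree $2$ in $D$ is a common vertex of two distinct cycles, yet $D-x\cong C_4$, so $c(D-x)=1=c(D)-1>c(D)-2$. Thus (iii) only holds under the additional hypothesis that the two cycles do not use the same pair of edges at $x$ (which in particular forces $d\ge 3$), and under that hypothesis your two surviving cases, $|\{a_1,b_1\}\cap\{a_2,b_2\}|\in\{0,1\}$, do give $k\le d-2$ and finish the proof. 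This defect is inherited from the statement in \cite{WDY} and is harmless for the present paper, which only ever invokes parts (i) and (ii) of the lemma (in Lemmas \ref{L771}, \ref{L881}, \ref{L883} and \ref{L885}).
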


\begin{theorem}[\cite{LSC,TFL}]\label{mixed2}
Let $\widetilde{G}$ be a connected mixed graph. Then
$$
|V(G)|-2m(G)-c(G) \leq \eta(\widetilde{G}) \leq |V(G)|-2m(G)+2c(G).
$$
\end{theorem}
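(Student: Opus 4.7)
The plan is to pass to ranks via $\eta(\widetilde{G}) = |V(G)| - r(\widetilde{G})$, converting the two inequalities into $2m(G) - 2c(G) \le r(\widetilde{G}) \le 2m(G) + c(G)$, and prove both simultaneously by induction on $|V(G)|$. Additivity of $r(\widetilde{G})$ (Lemma \ref{L16}(ii)), $m(G)$, and $c(G)$ across connected components means the inductive hypothesis can be applied freely to possibly disconnected subgraphs, so the hypothesis ``$\widetilde{G}$ is connected'' in the theorem statement is not a real restriction during the induction. After the trivial base case $|V(G)| = 1$, I would split the inductive step on whether $\widetilde{G}$ has a pendant vertex.

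If $\widetilde{G}$ has a pendant vertex $x$ with neighbor $y$, set $\widetilde{H} = \widetilde{G} - x - y$. Lemma \ref{L13} gives $\eta(\widetilde{G}) = \eta(\widetilde{H})$, Lemma \ref{L19} gives $m(G) = m(H) + 1$, and Lemma \ref{L23} yields $c(H) \le c(G)$ (since $x$ is pendant, only cycles through $y$ can be destroyed). Substituting $|V(H)| = |V(G)| - 2$, $m(H) = m(G) - 1$, and $c(H) \le c(G)$ into the inductive hypothesis for $\widetilde{H}$ yields both bounds for $\widetilde{G}$ after routine arithmetic.

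If $\widetilde{G}$ has no pendant vertex, every vertex has degree at least $2$, so $c(G) \ge 1$. Fix a vertex $x$ on some cycle and invoke Lemma \ref{L0001}, which gives $|\eta(\widetilde{G}) - \eta(\widetilde{G}-x)| \le 1$. For the upper bound on $\eta$, combining $c(G-x) \le c(G) - 1$ (Lemma \ref{L23}(ii)) with $m(G-x) \ge m(G) - 1$ (Lemma \ref{L17}) and the inductive hypothesis for $\widetilde{G}-x$ delivers $\eta(\widetilde{G}) \le |V(G)| - 2m(G) + 2c(G)$ for any cycle-vertex $x$. The lower bound, however, reduces to the inequality $2(m(G) - m(G-x)) + (c(G) - c(G-x)) \ge 2$, which holds automatically when $x$ lies on at least two cycles (Lemma \ref{L23}(iii)) or when $m(G-x) = m(G) - 1$, but can fail otherwise.

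The main obstacle is therefore to exhibit such an $x$ whenever neither easy condition holds for any cycle vertex; the critical case is when $\widetilde{G}$ is itself a single mixed cycle. My plan is to handle mixed cycles as separate base cases: the rank of $\widetilde{C}_n$ admits a closed-form expression in terms of $n$ and $\sigma(\widetilde{C}_n)$, from which both bounds can be verified by direct computation. For all other graphs in the no-pendant case, I plan to argue by contradiction that a good $x$ always exists: if every cycle-vertex lay on a unique cycle and was missed by some maximum matching, then, picking a cycle $C$ and a maximum matching $M$ missing some $v \in C$, the forced $M$-alternation along $C$ (every cycle-neighbor of a missed vertex must be saturated by an edge leaving $C$, by Lemma \ref{MATCH} and the absence of pendants) can be extended through a vertex of $C$ having a neighbor outside $C$ to produce an $M$-augmenting path, contradicting maximality of $M$. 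This reduction closes the induction and establishes both inequalities.
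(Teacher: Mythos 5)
The paper does not actually prove this statement: Theorem \ref{mixed2} is imported from \cite{LSC,TFL} and used as a black box, so there is no internal proof to compare against. Judged on its own terms, your induction handles the base cases, the reduction to connected components, the pendant-vertex step, and the upper bound $\eta(\widetilde{G})\le |V(G)|-2m(G)+2c(G)$ correctly. The gap is in the lower bound $\eta(\widetilde{G})\ge |V(G)|-2m(G)-c(G)$ (equivalently $r(\widetilde{G})\le 2m(G)+c(G)$) in the pendant-free, non-cycle case. There you need a vertex $x$ with $2\bigl(m(G)-m(G-x)\bigr)+\bigl(c(G)-c(G-x)\bigr)\ge 2$, and you claim that the failure of this for every cycle vertex forces an $M$-augmenting path. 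That claim is false. Take $G$ to be two triangles $a_1a_2a_3$ and $b_1b_2b_3$ joined by the path $a_1pb_1$: then $|V(G)|=7$, $m(G)=3$, $c(G)=2$, $G$ has no pendant vertex and is not a cycle, yet every one of the six cycle vertices $v$ satisfies $m(G-v)=m(G)$ and $c(G-v)=c(G)-1$, so your key inequality reads $0+1\ge 2$ and fails for every cycle vertex. The false step is the parenthetical assertion that a cycle-neighbour of an $M$-unsaturated vertex must be saturated by an edge leaving the cycle: for the maximum matching $M=\{a_2a_3,\;pb_1,\;b_2b_3\}$, which misses $a_1$, both cycle-neighbours $a_2,a_3$ of $a_1$ are saturated by the edge $a_2a_3$ of the triangle itself, and no $M$-augmenting path exists.

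In this example the inductive step can be rescued by deleting the non-cycle vertex $p$ (then $m(G-p)=m(G)-1$ and $c(G-p)=c(G)$, so the inequality holds as $2+0\ge 2$), but your proposal explicitly restricts the search to cycle vertices, and in any case the existence of a suitable vertex in general requires a genuinely different argument. One workable route: if no vertex $x$ at all satisfies $m(G-x)<m(G)$, then $G$ is factor-critical, hence $2$-edge-connected, hence (being pendant-free and not a cycle) contains a vertex of degree at least $3$ with $\omega(G-x)\le d(x)-2$ and therefore $c(G-x)\le c(G)-2$; otherwise a vertex with $m(G-x)<m(G)$ already suffices whether or not it lies on a cycle. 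None of this machinery appears in your sketch, so the lower bound on $\eta(\widetilde{G})$ is not established.
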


\begin{theorem}[\cite{LSC,TFL}]\label{mixed1}
Let $\widetilde{G}$ be a connected mixed graph.
Then $\eta(\widetilde{G})=|V(G)|-2m(G)+2c(G)$ if and only if  all the following conditions hold for $\widetilde{G}$:

{\em(i)} the cycles (if any) of $\widetilde{G}$ are pairwise vertex-disjoint;

{\em(ii)} each cycle $\widetilde{C_{l}}$ of $\widetilde{G}$ is even with $\sigma(\widetilde{C_{l}}) \equiv   l \, (\rm{mod} \ 4)$;

{\em(iii)} $m(T_{G})=m(G-O(G))$, where $O(G)$ is the set of vertices in cycles of $\widetilde{G}$.
\end{theorem}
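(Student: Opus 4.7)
My plan is to establish the characterization by induction on $|V(G)|$, using the \emph{pendant-pair reduction} furnished by Lemmas \ref{L13}, \ref{L19}, and \ref{L23}(i) as the main inductive tool. The first move is to unwind the formula: $\eta(\widetilde{G})=|V(G)|-2m(G)+2c(G)$ is equivalent to $r(\widetilde{G})=2m(G)-2c(G)$, which by Theorem \ref{mixed2} is the minimum rank attainable, so the characterization is equivalently a rank-minimality statement.

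Suppose first that $\widetilde{G}$ has a pendant vertex $x$ whose neighbor $y$ lies on no cycle. Then each of the following quantities is preserved in passing from $\widetilde{G}$ to $\widetilde{G}-x-y$: the nullity (Lemma \ref{L13}), the difference $|V(G)|-2m(G)$ (Lemma \ref{L19}), the cyclomatic number (Lemma \ref{L23}(i)), and also each of the three structural conditions (i), (ii), (iii), since we do not touch any cycle and $T_G$ simply loses the pendant pair $\{x,y\}$ from its tree part. Thus the theorem for $\widetilde{G}$ reduces to the theorem for $\widetilde{G}-x-y$, so by induction I may assume no such pendant exists. After this reduction, either $\widetilde{G}$ is a forest (in which case $c(G)=0$ and the equivalence is immediate from Lemma \ref{L131}), or every pendant of $\widetilde{G}$ has its neighbor on a cycle.

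For the \emph{if} direction in the remaining case, condition (i) lets me treat the cycles as disjoint blocks, condition (iii) lets me choose a maximum matching of $G$ that avoids the cycle-to-tree interface, and condition (ii) pins the nullity of each mixed cycle $\widetilde{C_l}$ to $2$ by the standard spectral computation, giving rank $l-2=2m(C_l)-2c(C_l)$. Summing the cycle contributions together with the tree contributions from Lemma \ref{L131} then yields exactly $r(\widetilde{G})=2m(G)-2c(G)$. For the \emph{only if} direction I argue the contrapositive: if (i) fails at some shared vertex $v$, combining Lemma \ref{L23}(iii) (losing at least $2$ from $c$) with Lemma \ref{L0001} and Lemma \ref{L17} (losing at most $1$ each from $\eta$ and $m$) contradicts the target equality; if (ii) fails on some $\widetilde{C_l}$, a direct spectral computation shows $r(\widetilde{C_l})>2m(C_l)-2c(C_l)$, and this surplus propagates to $\widetilde{G}$ via Lemma \ref{L16}(i); if (iii) fails, then by Lemma \ref{MATCH} an $M$-alternating path crossing the cycle–tree interface forces a maximum matching that saturates a cyclic vertex from the tree side, and I would show that this is incompatible with the block-spectral count used in the \emph{if} direction.

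The main obstacle, I expect, is the failure of condition (iii): a mismatch $m(T_G)>m(G-O(G))$ is global rather than local, and tracking its influence through the pendant-pair reduction requires careful bookkeeping of how a maximum matching interacts with cyclic vertices and the tree components $[T_G]$. This is where I anticipate the most delicate case analysis, likely requiring one to follow a specific alternating path between a cyclic vertex and a $[T_G]$-unsaturated vertex and show that its removal leaves a smaller mixed graph still violating the nullity identity, which then contradicts the inductive hypothesis.
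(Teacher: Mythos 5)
The paper does not prove Theorem~\ref{mixed1} at all: it is imported verbatim from \cite{LSC,TFL} and used as a black box, so there is no in-paper argument to measure you against. Judged on its own, your outline assembles the right ingredients (the pendant-pair reduction via Lemmas~\ref{L13}, \ref{L19}, \ref{L23}(i) is sound and preserves both the nullity identity and conditions (i)--(iii); the refutation of (i) via Lemma~\ref{L23}(iii) together with Lemmas~\ref{L0001} and~\ref{L17} against the bound of Theorem~\ref{mixed2} is correct), but two of the remaining steps have genuine gaps.

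First, in the \emph{if} direction, ``summing the cycle contributions together with the tree contributions'' is not yet an argument. What you can get cheaply is subadditivity of rank over the edge decomposition $H(\widetilde{G})=\sum_l H(\widetilde{C_l})+H(\widetilde{F})$, where $\widetilde{F}$ is the forest left after deleting all cycle edges; with Lemma~\ref{L25} this gives $r(\widetilde{G})\le \sum_l (l-2)+2m(F)$. To conclude $r(\widetilde{G})\le 2m(G)-2c(G)$ you then need the matching identity $m(G)=\sum_l \tfrac{l}{2}+m(F)$ (equivalently $m(G)=\sum_l m(C_l)+m(G-O(G))$), which is \emph{false} in general --- for $C_4$ with one pendant edge the right side exceeds $m(G)$ --- and holds exactly because of condition (iii). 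Deriving that identity from $m(T_G)=m(G-O(G))$ is the substantive combinatorial content of this direction and is entirely absent from your sketch. Second, in the \emph{only if} direction your treatment of a failing condition (ii) does not work as stated: Lemma~\ref{L16}(i) gives only $r(\widetilde{G})\ge r(\widetilde{C_l})$, and $r(\widetilde{C_l})\in\{l-1,l\}$ does not contradict $r(\widetilde{G})=2m(G)-2c(G)$ once $G$ properly contains the cycle, so the ``surplus'' does not propagate by monotonicity alone. You must instead strip the rest of the graph vertex by vertex while tracking how $|V|-2m+2c$ and $\eta$ change (as in the proof of Lemma~\ref{L885}), or argue by inertia additivity. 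Finally, you explicitly defer the case where (iii) fails to future ``delicate case analysis''; as that is one of the three claims to be proved, the proposal is an outline rather than a proof.
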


\begin{lemma} \label{L25}{\rm\cite{YBJ}} Let $\widetilde{C_{n}}$ be a mixed cycle with $n$ vertices. Then
$$r(\widetilde{C_{n}})=\left\{
             \begin{array}{ll}
               n-1, & \hbox{if $n$ is odd, $\sigma(\widetilde{C_{n}})$ is odd;} \\
               n, & \hbox{if $n$ is odd, $\sigma(\widetilde{C_{n}})$ is even;} \\
               n, & \hbox{if $n$ is even, $\sigma(\widetilde{C_{n}})$ is odd;} \\
               n, & \hbox{if $n$ is even, $n+\sigma(\widetilde{C_{n}}) \equiv 2 \ (\rm{mod} \ 4)$;} \\
               n-2, & \hbox{if $n$ is even, $n+\sigma(\widetilde{C_{n}}) \equiv 0 \ (\rm{mod} \ 4)$.}
             \end{array}
           \right.
$$
\end{lemma}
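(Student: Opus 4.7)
The plan is to compute $\det H(\widetilde{C_n})$ explicitly via the Leibniz formula, then to pin down the rank in the degenerate cases by combining a path-submatrix lower bound (Lemma \ref{L131}) with an explicit kernel construction.

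First, I expand $\det H(\widetilde{C_n})=\sum_{\pi\in S_n}\operatorname{sgn}(\pi)\prod_i H_{i,\pi(i)}$. A permutation $\pi$ contributes nonzero only if every cycle of its cycle decomposition traces vertices that are adjacent along $C_n$. Since $C_n$ contains no proper cycle of length $\ge 3$, the only such $\pi$ are the two cyclic rotations $k\mapsto k\pm 1\pmod n$ and, when $n$ is even, the two perfect matchings of $C_n$. This yields
\[
\det H(\widetilde{C_n}) \;=\; 2(-1)^{n/2}\,\mathbf{1}_{\{n\text{ even}\}} \;+\; (-1)^{n-1}(\omega+\overline{\omega}),
\]
where $\omega := h_{12}h_{23}\cdots h_{n,1}$. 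Since each $h_{k,k+1}\in\{1,\pm\mathbf{i}\}$, we have $\omega=\mathbf{i}^{\pm\sigma(\widetilde{C_n})}$ and therefore $\omega+\overline{\omega}=2\cos(\sigma\pi/2)$.

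Second, a parity check on $n$ and $\sigma$ modulo $4$ shows that $\det\ne 0$ in exactly the cases where the lemma claims rank $n$: namely $n$ odd with $\sigma$ even; $n$ even with $\sigma$ odd; and $n$ even with $\sigma$ even and $n+\sigma\equiv 2\pmod 4$. In the two remaining cases $\det=0$, so $r(\widetilde{C_n})\le n-1$. I then invoke the fact that deleting any vertex of $\widetilde{C_n}$ yields a mixed path $\widetilde{P_{n-1}}$ whose rank, by Lemma \ref{L131}, is $2\lfloor(n-1)/2\rfloor$. Since every submatrix has rank at most that of the full matrix, this gives $r(\widetilde{C_n})\ge n-1$ when $n$ is odd and $r(\widetilde{C_n})\ge n-2$ when $n$ is even. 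Combined with $\det=0$, the odd case is settled: $r(\widetilde{C_n})=n-1$.

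The main obstacle is the last case $n$ even with $n+\sigma\equiv 0\pmod 4$, where I must upgrade the bound to $r(\widetilde{C_n})\le n-2$, equivalently $\eta(\widetilde{C_n})\ge 2$. To do this I exhibit two linearly independent elements of $\ker H(\widetilde{C_n})$. The kernel equations
\[
h_{k,k-1}v_{k-1}+h_{k,k+1}v_{k+1}=0\qquad(k=1,\dots,n,\ \text{indices mod }n)
\]
decouple the odd- and even-indexed coordinates of $v$, with each parity class satisfying a two-step cyclic recurrence around $C_n$. Tracing either recurrence once around and collecting factors of the form $-\overline{h_{k,k+1}}\,h_{k,k-1}=-\overline{h_{k,k+1}}\,\overline{h_{k-1,k}}$, the closing condition reduces to $(-1)^{n/2}\overline{\omega}=1$, which, by the expression for $\omega$ above, is precisely $n+\sigma\equiv 0\pmod 4$. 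Under this hypothesis, initialising $(v_1,v_2)=(1,0)$ and $(v_1,v_2)=(0,1)$ produces two null vectors supported on disjoint index-parity classes, hence independent. This gives $\eta(\widetilde{C_n})\ge 2$ and so $r(\widetilde{C_n})=n-2$, completing the lemma.
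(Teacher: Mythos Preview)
The paper does not prove this lemma; it is quoted verbatim from \cite{YBJ} and used as a black box. So there is no in-paper argument to compare against.

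Your proof is correct and self-contained. The determinant expansion is right: the only contributing permutations are the two $n$-cycles (giving $(-1)^{n-1}(\omega+\overline{\omega})$ with $\omega=\mathbf{i}^{\pm\sigma}$) and, for even $n$, the two perfect matchings (each contributing $(-1)^{n/2}$ since $h_{ij}h_{ji}=|h_{ij}|^2=1$). The case split on $(n\bmod 4,\sigma\bmod 4)$ then pinpoints exactly when $\det H\ne 0$. For the singular cases you correctly squeeze the rank: deleting a vertex gives a mixed path on $n-1$ vertices, and Lemma~\ref{L131} (equivalently Lemma~\ref{L16}(i)) yields $r(\widetilde{C_n})\ge 2\lfloor(n-1)/2\rfloor$, settling the odd-$n$ case immediately. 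In the remaining case your kernel construction is sound: the parity decoupling of the recurrence $h_{k,k-1}v_{k-1}+h_{k,k+1}v_{k+1}=0$ is genuine because $n$ is even, and tracing the odd (resp.\ even) index class once around multiplies by $(-1)^{n/2}\overline{\omega}$, which equals $1$ precisely when $n+\sigma\equiv 0\pmod 4$; the two resulting null vectors are supported on disjoint coordinate sets and hence independent. This is essentially the same computation carried out in \cite{YBJ}, so your approach matches the original source even though the present paper omits the argument.
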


\begin{theorem}\label{UNMIX}{\rm\cite{HSJ}}
Let $\widetilde{G}$ be a  mixed unicyclic graph with the cycle $\widetilde{C_{q}}$. Then we have
$$  (p^{+}(\widetilde{G}), n^{-}(\widetilde{G})) = \left\{
  \begin{array}{ll}
      (m(\widetilde{G})-1, m(\widetilde{G})-1), & \hbox{ if $q$ and $\sigma(\widetilde{C_{q}})$ are even, $q-\sigma(\widetilde{C_{q}}) \equiv 0 \ (\rm{mod} \ 4)  $ } \\
              & \hbox{ and no maximum matching contains an edge } \\
              & \hbox{ incident to the cycle;} \\
      (m(\widetilde{G})+1, m(\widetilde{G})), & \hbox{ if $q$ is odd, $\sigma(\widetilde{C_{q}})$ is even, $q-\sigma(\widetilde{C_{q}}) \equiv 1 \ (\rm{mod} \ 4)  $ } \\
                 & \hbox{ and $m(\widetilde{G})=m(\widetilde{G}-V(\widetilde{C_{q}}))+\frac{q-1}{2}$ ;} \\
      (m(\widetilde{G}), m(\widetilde{G})+1), & \hbox{ if $q$ is odd, $\sigma(\widetilde{C_{q}})$ is even, $q-\sigma(\widetilde{C_{q}}) \equiv 3 \ (\rm{mod} \ 4)  $ } \\
                      & \hbox{ and $m(\widetilde{G})=m(\widetilde{G}-V(\widetilde{C_{q}}))+\frac{q-1}{2}$ ;} \\
      (m(\widetilde{G}), m(\widetilde{G})), & \hbox{otherwise.}
    \end{array}
  \right.
$$
\end{theorem}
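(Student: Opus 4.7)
The plan is to induct on $|V(\widetilde{G})|$, using pendant-vertex deletion to reduce to the case $\widetilde{G}=\widetilde{C_{q}}$. The essential tool is an inertia refinement of Lemma \ref{L13}: if $x$ is a pendant vertex of a mixed graph $\widetilde{G}$ with neighbour $y$, then
$$
p^{+}(\widetilde{G})=p^{+}(\widetilde{G}-x-y)+1 \quad\text{and}\quad n^{-}(\widetilde{G})=n^{-}(\widetilde{G}-x-y)+1.
$$
This follows from a one-step Hermitian congruence: the pendant edge $xy$ isolates a $2\times 2$ off-diagonal block of inertia $(1,1)$, and a Schur complement cleans out the remaining row and column of $y$. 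Combined with Lemma \ref{L19}, both $p^{+}-m$ and $n^{-}-m$ are preserved under this reduction, which is what the theorem asks us to track.

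For the base case $\widetilde{G}=\widetilde{C_{q}}$, I would compute $(p^{+},n^{-})$ by direct spectral analysis: after a diagonal unitary gauge, $H(\widetilde{C_{q}})$ is unitarily similar to a circulant with eigenvalues $2\cos\!\bigl(\frac{2\pi k+\pi\sigma(\widetilde{C_{q}})}{q}\bigr)$ for $k=0,1,\ldots,q-1$. Counting the positive and negative cosines in each parity/signature case, and cross-checking with the rank values given by Lemma \ref{L25}, pins down $(p^{+},n^{-})$ on the bare cycle. For the bare cycle the case~(1) hypothesis ``no maximum matching contains an edge incident to the cycle'' is vacuously satisfied (there are no non-cycle edges), and for odd $q$ the matching identity $m(\widetilde{G})=m(\widetilde{G}-V(\widetilde{C_{q}}))+(q-1)/2$ is automatic, so the base case checks out.

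For the inductive step, pick a pendant $x$ with neighbour $y$ and set $\widetilde{G}'=\widetilde{G}-x-y$. If $y\notin V(\widetilde{C_{q}})$, then $\widetilde{G}'$ is the disjoint union of a mixed unicyclic graph on $\widetilde{C_{q}}$ with some trees; each tree component $\widetilde{T}_{j}$ satisfies $p^{+}(\widetilde{T}_{j})=n^{-}(\widetilde{T}_{j})=m(\widetilde{T}_{j})$ by Lemma \ref{L131} together with the bipartite spectral symmetry of its Hermitian adjacency. Using Lemmas \ref{L17} and \ref{L19} one verifies that each hypothesis of Theorem \ref{UNMIX} (parity of $q$, parity of $\sigma$, the residue of $q-\sigma$ modulo $4$, the matching equality, and the ``no maximum matching incident to the cycle'' property) transfers between $\widetilde{G}$ and the unicyclic component of $\widetilde{G}'$, whence the induction hypothesis plus adding $(1,1)$ delivers the claim. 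If $y\in V(\widetilde{C_{q}})$, then $\widetilde{G}'$ is a forest, so $p^{+}(\widetilde{G}')=n^{-}(\widetilde{G}')=m(\widetilde{G}')$ and therefore $(p^{+}(\widetilde{G}),n^{-}(\widetilde{G}))=(m(\widetilde{G}),m(\widetilde{G}))$. For this to agree with the theorem, we must verify that none of cases~(1)--(3) can apply: the edge $xy$ extends any maximum matching of $\widetilde{G}'$ to one of $\widetilde{G}$ (Lemma \ref{L19}) and is incident to the cycle, defeating case~(1); and if the matching identity of case~(2) or (3) held, Berge's Lemma (Lemma \ref{MATCH}) would produce an $M$-augmenting path from $x$ through the unique $M$-unsaturated vertex of the odd cycle, contradicting maximality.

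The main obstacle I foresee is this bookkeeping in the second subcase: proving that whenever a pendant hangs from a cycle vertex, none of the structured conditions of cases~(1)--(3) can hold. This reduces to an augmenting-path argument on an odd cycle once the pendant edge is added to the matching, but the parity conditions $q-\sigma\equiv 1,3\pmod{4}$ have to be threaded through the argument carefully. The other delicate piece is the direct inertia count on the bare cycle, which must be verified for all five subcases of Lemma \ref{L25}; the parity arithmetic on $q-\sigma\pmod{4}$ for odd $q$ determines the direction of spectral asymmetry, producing the $\pm 1$ discrepancy between $p^{+}$ and $n^{-}$ recorded in cases~(2) and (3).
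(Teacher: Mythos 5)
The paper does not prove Theorem~\ref{UNMIX}: it is imported from the preprint \cite{HSJ} and used as a black box (only Corollary~\ref{UNM} is extracted from it), so there is no in-paper proof to compare yours against. Judged on its own, your strategy is the natural one and is essentially workable. The linchpin, the inertia refinement $p^{+}(\widetilde{G})=p^{+}(\widetilde{G}-x-y)+1$ and $n^{-}(\widetilde{G})=n^{-}(\widetilde{G}-x-y)+1$ for a pendant edge $xy$, is correct: the $2\times 2$ block on $\{x,y\}$ has inertia $(1,1)$ and, because the row of $x$ has a single nonzero off-diagonal entry, its Schur complement in $H(\widetilde{G})$ is exactly $H(\widetilde{G}-x-y)$, so Haynsworth additivity applies. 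Combined with Lemma~\ref{L19} this reduces the theorem to the bare cycle plus hypothesis bookkeeping, and since a connected unicyclic graph with no pendant vertex is a cycle, the induction is well founded. Two caveats, though. First, your circulant eigenvalues are off by a factor of two in the phase: the holonomy of a mixed cycle is $\mathbf{i}^{\,f-b}=e^{\mathbf{i}\pi\sigma/2}$, so after gauging the eigenvalues are $2\cos\bigl(\frac{2\pi k}{q}+\frac{\pi\sigma}{2q}\bigr)$, not $2\cos\bigl(\frac{2\pi k+\pi\sigma}{q}\bigr)$; the formula you wrote is the signed-graph one, and as stated the base-case count would come out wrong (your proposed cross-check against Lemma~\ref{L25} would catch this). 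Note also that the statement is only true if ``edge incident to the cycle'' is read as ``edge with exactly one end on the cycle''; your base case tacitly and correctly adopts that reading, but it should be made explicit.

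Second, you locate the main difficulty in the subcase $y\in V(\widetilde{C_{q}})$, but that subcase is actually easy: a maximum matching of $G-V(\widetilde{C_{q}})$ together with a near-perfect matching of $\widetilde{C_{q}}-y$ and the edge $xy$ already gives $m(G)\ge m(G-V(\widetilde{C_{q}}))+\frac{q-1}{2}+1$, which kills the matching equality of cases (2)--(3) outright, with no augmenting-path argument needed. The genuinely delicate step is the one you dismiss as routine: transferring the case-(1) condition when $y\notin V(\widetilde{C_{q}})$. A maximum matching of $\widetilde{G}$ may contain an edge $yv$ with $v$ on the cycle, and that edge does not survive in $\widetilde{G}-x-y$, so ``some maximum matching of $G$ uses an edge leaving the cycle'' does not obviously descend to the unicyclic component $\widetilde{G_{1}}$. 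One must argue that if no maximum matching of $G_{1}$ uses an edge leaving the cycle, then every maximum matching of $G_{1}$ restricts to a perfect matching of the (even) cycle, hence saturates every cycle vertex $v$, hence $m(G_{1}-v)<m(G_{1})$ and no maximum matching of $G$ can contain $yv$ (since matching $y$ away from $x$ isolates $x$). Without this argument the reduction could silently move a graph out of case (1), and the induction would return $(m,m)$ where the truth is $(m-1,m-1)$. This gap is fillable, but it is the real content of the inductive step and needs to be written out.
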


From Lemma \ref{UNMIX}, the following Corollary \ref{UNM} can be obtained immediately.

\begin{corollary}\label{UNM}
Let $\widetilde{H}$ be a mixed unicyclic graph. Then
$$\eta(\widetilde{H}) \neq |V(H)|-2m(H)+2c(H)-1.$$
\end{corollary}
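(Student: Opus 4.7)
The plan is to derive the corollary as a direct consequence of Theorem \ref{UNMIX}, with no further graph-theoretic argument required. Since $\widetilde{H}$ is unicyclic, $c(H)=1$, and the inequality to be established reduces to
$$\eta(\widetilde{H})\neq |V(H)|-2m(H)+1.$$
Using the identity $\eta(\widetilde{H})=|V(H)|-p^{+}(\widetilde{H})-n^{-}(\widetilde{H})$, this is equivalent to
$$p^{+}(\widetilde{H})+n^{-}(\widetilde{H})\neq 2m(H)-1.$$
So the task is simply to observe that the rank $r(\widetilde{H})=p^{+}(\widetilde{H})+n^{-}(\widetilde{H})$ of a mixed unicyclic graph can never equal $2m(H)-1$.

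The next step is to read off the possible values of $p^{+}(\widetilde{H})+n^{-}(\widetilde{H})$ from Theorem \ref{UNMIX}. The theorem enumerates exactly four cases, and in each case the sum $p^{+}(\widetilde{H})+n^{-}(\widetilde{H})$ is one of
$$2m(\widetilde{H})-2,\qquad 2m(\widetilde{H})+1,\qquad 2m(\widetilde{H})+1,\qquad 2m(\widetilde{H}).$$
None of these equals $2m(\widetilde{H})-1=2m(H)-1$, which establishes the corollary.

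There is no real obstacle here: the entire content is packaged in Theorem \ref{UNMIX}, which classifies $(p^{+},n^{-})$ according to the parity of the cycle length $q$, the parity of $\sigma(\widetilde{C_{q}})$, the residue of $q-\sigma(\widetilde{C_{q}})$ modulo $4$, and whether a maximum matching contains an edge incident to the cycle. Once this classification is in hand, the corollary follows by inspection, and the proof amounts to little more than adding the two coordinates in each of the four cases and noting that the odd value $2m(H)-1$ is skipped.
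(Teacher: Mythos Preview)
Your proof is correct and follows exactly the route intended by the paper: the authors state that Corollary~\ref{UNM} ``can be obtained immediately'' from Theorem~\ref{UNMIX}, and your argument is precisely the two-line verification they have in mind. Summing the inertia pairs in the four cases of Theorem~\ref{UNMIX} to get $r(\widetilde{H})\in\{2m(H)-2,\,2m(H),\,2m(H)+1\}$ and noting that $2m(H)-1$ never appears is all that is required.
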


\section{Proof of Theorem \ref{T1}.}

In this section, the proof for Theorem \ref{T1} is provided. Firstly, an operation on graphs is introduced.

\begin{definition}\label{PED}
Let $G$ be a graph with at least one pendant vertex. The operation of deleting a pendant vertex and its adjacent
vertex from $G$ is called PED (short form for `pendant edge deletion').
\end{definition}

In the following, some useful lemmas which
will be used to prove the main result of this section are introduced.

Let $D$ be the graph in Fig. 3. It is easy to see that $m(D)=2$ and $c(D)=2$.

\begin{center}   \setlength{\unitlength}{0.7mm}
\begin{picture}(30,60)

\put(15,10){\circle*{2}}
\put(15,30){\circle*{2}}
\put(15,50){\circle*{2}}
\put(-25,30){\circle*{2}}
\put(55,30){\circle*{2}}

\put(15,30){\line(1,0){40}}
\put(15,30){\line(-1,0){40}}

\put(-25,30){\line(2,-1){40}}
\put(-25,30){\line(2,1){40}}

\put(55,30){\line(-2,1){40}}
\put(55,30){\line(-2,-1){40}}

\put(-12,-5){Fig. 3. The graph $D$}

\end{picture} \end{center}

\begin{lemma}\label{L880}
Let $\widetilde{G}$ be a mixed graph with the underlying graph $G \cong D$. Then $r(\widetilde{G}) \geq 2$, i.e.,
$$\eta(\widetilde{G})\neq |V(G)|- 2m(G)+2c(G)-s, s=0, 1.$$

\end{lemma}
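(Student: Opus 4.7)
The plan is first to observe that the inequality $r(\widetilde{G})\geq 2$ is equivalent to the stated exclusion of $\eta$-values, and then to establish this lower bound by exhibiting a $2\times 2$ principal submatrix of $H(\widetilde{G})$ of full rank. Reading Fig.~3, $D$ is the complete bipartite graph $K_{2,3}$ with bipartition $\{u,v\}\cup\{a,b,c\}$ (the two ``outer'' vertices against the three ``middle'' vertices); thus $|V(D)|=5$, $m(D)=2$, $c(D)=2$, and so $|V(G)|-2m(G)+2c(G)=5$. Since $\eta(\widetilde{G})=|V(G)|-r(\widetilde{G})$, the two forbidden values $\eta=5$ and $\eta=4$ correspond to $r(\widetilde{G})=0$ and $r(\widetilde{G})=1$, so the content of the lemma is exactly $r(\widetilde{G})\geq 2$.

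To prove this, I would pick any edge $xy$ of $D$ and let $z=h_{xy}\in\{1,\mathbf{i},-\mathbf{i}\}$ be the Hermitian label that $\widetilde{G}$ assigns to $xy$. The $2\times 2$ principal submatrix of $H(\widetilde{G})$ indexed by $\{x,y\}$ is
\[
\begin{pmatrix} 0 & z \\ \overline{z} & 0 \end{pmatrix},
\]
whose determinant is $-|z|^2=-1\neq 0$; hence it has rank $2$. Because the rank of any submatrix is at most the rank of the ambient matrix, $r(\widetilde{G})\geq 2$, which is the required bound.

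An equivalent, slightly more structural route uses the bipartiteness of $D=K_{2,3}$ directly: ordering vertices so that $\{u,v\}$ comes first, $H(\widetilde{G})$ takes the block anti-diagonal form with a $2\times 3$ block $B$ whose six entries are all unit complex numbers, and the standard singular-value computation gives $r(\widetilde{G})=2\,\mathrm{rank}(B)\geq 2$. No genuine obstacle arises in either version; the real value of the lemma lies not in its proof but in its later role, where I expect it to be invoked to rule out $D$ as an induced substructure in the case analysis underlying Theorem~\ref{T1}.
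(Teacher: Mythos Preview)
Your argument is correct. Both you and the paper reduce the lemma to the elementary observation that $r(\widetilde{G})\geq 2$, and both verify this by a direct linear-algebra check; the paper writes out the full $5\times 5$ Hermitian matrix of $\widetilde{G}$ and notes that its first two rows $(0,\alpha_1,\alpha_2,\alpha_3,0)$ and $(\alpha_4,0,0,0,\alpha_5)$ are linearly independent, whereas you pick a single edge and exhibit a nonsingular $2\times 2$ principal submatrix. Your route is marginally cleaner and, as you implicitly note, works for \emph{any} mixed graph with at least one edge, not just for $D$; the paper's version is tailored to the specific block shape of $H(\widetilde{G})$ coming from the bipartition of $K_{2,3}$. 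Either way the content is the same trivial rank bound, and your reduction of the $\eta$-statement to $r\geq 2$ via $|V(D)|-2m(D)+2c(D)=5$ is exactly right.
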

\begin{proof} By definition, the adjacency matrix of $\widetilde{G}$ can be written by
$$H(\widetilde{G})=\left(
  \begin{array}{ccccc}
    0 & \alpha_{1} & \alpha_{2} & \alpha_{3} & 0 \\
    \alpha_{4} & 0 & 0 & 0 & \alpha_{5} \\
    \alpha_{6} & 0 & 0 & 0 & \alpha_{7} \\
    \alpha_{8} & 0 & 0 & 0 & \alpha_{9} \\
    0 & \alpha_{10} & \alpha_{11} & \alpha_{12} & 0 \\
  \end{array}
\right),$$
where $\alpha_{i} \in \{ 1, \mathbf{i}, \mathbf{-i}  \} $ for $1 \leq i \leq 12$. It can be checked that the vectors $(0, \alpha_{1}, \alpha_{2}, \alpha_{3}, 0)$ and
$(\alpha_{4}, 0, 0, 0, \alpha_{5})$ are linearly independent. Thus, $r(\widetilde{G}) \geq 2$ and the result follows immediately.
\end{proof}




\begin{lemma}\label{L771}
Let $x$ be a pendant vertex of a mixed $\widetilde{G}$, and the quasi-pendant vertex $y$ of $x$ does not lie in any cycle of $\widetilde{G}$. If $\eta(\widetilde{G}-x-y) \neq |V(G-x-y)|-2m(G-x-y)+2c(G-x-y)-s$ $(s=0, 1)$, then
$$\eta(\widetilde{G}) \neq |V(G)|-2m(G)+2c(G)-s.$$
\end{lemma}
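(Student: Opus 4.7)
The plan is to prove the contrapositive: assume $\eta(\widetilde{G}) = |V(G)|-2m(G)+2c(G)-s$ for some $s \in \{0,1\}$, and deduce the same equation for $\widetilde{G}-x-y$. The key observation is that all three graph-parameters on the right-hand side change in a completely controlled way when we perform a PED at the pendant edge $xy$, so the constant $|V|-2m+2c$ is invariant under this operation.

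First I would collect the three change-laws. Since $x$ is a pendant vertex, Lemma \ref{L13} gives $\eta(\widetilde{G})=\eta(\widetilde{G}-x-y)$, so the left-hand side is preserved. Lemma \ref{L19} gives $m(G)=m(G-x-y)+1$, so $2m$ drops by $2$. For the cyclomatic number, the hypothesis that $y$ is not on any cycle of $\widetilde{G}$ lets me invoke Lemma \ref{L23}(i) to conclude $c(G-y)=c(G)$; the pendant vertex $x$ is also not on any cycle of $G-y$, so a second application of Lemma \ref{L23}(i) yields $c(G-x-y)=c(G)$. Finally $|V(G-x-y)|=|V(G)|-2$ by definition.

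Substituting these four equalities into $|V(G-x-y)|-2m(G-x-y)+2c(G-x-y)-s$ gives
\[
(|V(G)|-2)-2(m(G)-1)+2c(G)-s = |V(G)|-2m(G)+2c(G)-s,
\]
so by the assumption and Lemma \ref{L13},
\[
\eta(\widetilde{G}-x-y)=\eta(\widetilde{G})=|V(G)|-2m(G)+2c(G)-s=|V(G-x-y)|-2m(G-x-y)+2c(G-x-y)-s,
\]
which contradicts the hypothesis on $\widetilde{G}-x-y$. This proves the contrapositive and hence the lemma.

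There is no real obstacle here; the whole content is the bookkeeping that $xy$ is a pendant edge disjoint from the cycle structure, so deleting both endpoints decreases $|V|$ by $2$, $m$ by $1$, and leaves $c$ unchanged. The only subtle point to state carefully is the two-step argument for $c(G-x-y)=c(G)$, which requires using the hypothesis on $y$ and the fact that a pendant vertex cannot lie on a cycle, so that Lemma \ref{L23}(i) applies twice in succession.
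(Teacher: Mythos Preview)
Your proof is correct and follows essentially the same approach as the paper: both arguments rest on the three identities $\eta(\widetilde{G})=\eta(\widetilde{G}-x-y)$, $m(G)=m(G-x-y)+1$, and $c(G)=c(G-x-y)$ (via Lemmas \ref{L13}, \ref{L19}, \ref{L23}), and then substitute. The only cosmetic differences are that you frame it as a contrapositive and are slightly more explicit in applying Lemma \ref{L23}(i) in two steps, whereas the paper applies it in one stroke.
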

\begin{proof}
Since $y$ does not lie in any cycle of $\widetilde{G}$, by lemmas \ref{L19} and \ref{L23}, one has
$$m(G-x-y)=m(G)-1,$$
$$c(G-x-y)=c(G),$$
and
$$|V(G-x-y)|=|V(G)|-2.$$
Then, by Lemma \ref{L13}, we have
\begin{eqnarray*}
\eta(\widetilde{G})
&=&\eta(\widetilde{G}-x-y)\\
&\neq&|V(G-x-y)|-2m(G-x-y)+2c(G-x-y)-s\\
&=&|V(G)|-2-2[m(G)-1]+2c(G)-s\\
&=&|V(G)|-2m(G)+2c(G)-s.\\
\end{eqnarray*}

This completes the proof.
\end{proof}

\begin{lemma}\label{L881}
Let $x$ be a pendant vertex of a mixed $\widetilde{G}$ and $y$ be the quasi-pendant vertex of $x$. If $y$ does not lie in any cycle of $\widetilde{G}$,
then $\eta(\widetilde{G})=|V(G)|-2m(G)+2c(G)-s$  $(0 \leq s \leq 3c(G))$ if and only if
$\eta(\widetilde{G}-x-y)=|V(G-x-y)|-2m(G-x-y)+2c(G-x-y)-s$.
\end{lemma}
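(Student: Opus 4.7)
The plan is to argue by a direct computation, mirroring the one used in Lemma \ref{L771} but now carrying the equalities in both directions so as to obtain an ``if and only if'' statement.

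First I would compute the three quantities appearing on the right-hand side of the target identity after the PED operation. Since $x$ is a pendant vertex, it cannot lie on any cycle of $\widetilde{G}$; by Lemma \ref{L23}(i) this gives $c(G-x) = c(G)$. Next, by hypothesis $y$ is not on any cycle of $\widetilde{G}$, and in particular not on any cycle of $G-x$, so another application of Lemma \ref{L23}(i) yields $c(G-x-y) = c(G-x) = c(G)$. Lemma \ref{L19}, applied to the pendant vertex $x$ with neighbour $y$, gives $m(G) = m(G-x-y)+1$. And trivially $|V(G-x-y)| = |V(G)|-2$.

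Substituting these three equalities into the right-hand side,
\begin{align*}
|V(G-x-y)|-2m(G-x-y)+2c(G-x-y)-s
&= \bigl(|V(G)|-2\bigr) - 2\bigl(m(G)-1\bigr) + 2c(G) - s \\
&= |V(G)| - 2m(G) + 2c(G) - s.
\end{align*}
At the same time, Lemma \ref{L13} gives $\eta(\widetilde{G}) = \eta(\widetilde{G}-x-y)$. Combining these two facts, the equation $\eta(\widetilde{G}) = |V(G)|-2m(G)+2c(G)-s$ is literally the same equation as $\eta(\widetilde{G}-x-y) = |V(G-x-y)|-2m(G-x-y)+2c(G-x-y)-s$, and the biconditional follows at once.

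There is essentially no obstacle in this argument, since the lemma is a direct strengthening (from one implication to an equivalence) of Lemma \ref{L771}; the only point worth flagging is to be explicit about why $y$ being outside every cycle of $\widetilde{G}$ lets us invoke Lemma \ref{L23}(i) after the deletion of $x$, which is immediate because deletions cannot create new cycles.
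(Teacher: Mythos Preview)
Your proposal is correct and follows essentially the same approach as the paper: both use Lemmas~\ref{L19}, \ref{L23}, and \ref{L13} to show that the three parameters $|V|$, $m$, and $c$ change in exactly compensating ways under the PED operation, so that the two equations are equivalent. The paper separates the argument into explicit ``Sufficiency'' and ``Necessity'' halves, whereas you more economically observe that the two equations are literally identical once the substitutions are made; your justification of $c(G-x-y)=c(G)$ via two applications of Lemma~\ref{L23}(i) is in fact slightly more careful than the paper's.
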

\begin{proof}

(Sufficiency.) From Lemmas \ref{L19} and \ref{L23}, one has
$$m(G-x-y)=m(G)-1,$$
$$c(G-x-y)=c(G),$$
and
$$|V(G-x-y)|=|V(G)|-2.$$

By Lemma \ref{L13} and $\eta(\widetilde{G}-x-y)=|V(G-x-y)|-2m(G-x-y)+2c(G-x-y)-s$, we have
\begin{eqnarray*}
\eta(\widetilde{G})
&=&\eta(\widetilde{G}-x-y)\\
&=&|V(G-x-y)|-2m(G-x-y)+2c(G-x-y)-s\\
&=&|V(G)|-2-2[m(G)-1]+2c(G)-s\\
&=&|V(G)|-2m(G)+2c(G)-s.\\
\end{eqnarray*}

(Necessity.) By Lemma \ref{L13}, one has that
\begin{eqnarray*}
\eta(\widetilde{G}-x-y)
&=&\eta(\widetilde{G})\\
&=&|V(G)|-2m(G)+2c(G)-s\\
&=&|V(G-x-y)|+2-2[m(G-x-y)+1]+2c(G-x-y)-s\\
&=&|V(G-x-y)|-2m(G-x-y)+2c(G-x-y)-s.\\
\end{eqnarray*}

This completes the proof.
\end{proof}

\begin{lemma}\label{L883}
Let $\widetilde{G}$ be a mixed graph with a pendant vertex $x$, and $y$ be the quasi-pendant vertex of $x$. If $y$ lies in some cycle of $\widetilde{G}$, then
$$\eta(\widetilde{G}) \leq |V(G)|-2m(G)+2c(G)-2.$$
\end{lemma}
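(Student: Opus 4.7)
The plan is to reduce from $\widetilde{G}$ to $\widetilde{G}-x-y$ via the pendant reduction (Lemma \ref{L13}), and then apply the general upper bound of Theorem \ref{mixed2} to the reduced graph, exploiting that removing $y$ kills at least one cycle.

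More concretely, I would proceed in three short steps. First, invoke Lemma \ref{L13} to obtain $\eta(\widetilde{G}) = \eta(\widetilde{G}-x-y)$. Second, track the three parameters $|V|$, $m$, $c$ under the deletion: clearly $|V(G-x-y)| = |V(G)|-2$, and Lemma \ref{L19} gives $m(G-x-y) = m(G)-1$. For the cyclomatic number, since $x$ is a pendant vertex of $G$, Lemma \ref{L23}(i) yields $c(G-x) = c(G)$, and since $x$ does not lie on any cycle, $y$ still lies on some cycle in $G-x$; hence Lemma \ref{L23}(ii) applied to $G-x$ gives $c(G-x-y) \leq c(G-x)-1 = c(G)-1$. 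Third, apply the upper bound half of Theorem \ref{mixed2}, which extends additively from connected to arbitrary mixed graphs (all of $\eta$, $|V|$, $m$, $c$ are additive over connected components), to the mixed graph $\widetilde{G}-x-y$:
\[
\eta(\widetilde{G}-x-y) \leq |V(G-x-y)| - 2m(G-x-y) + 2c(G-x-y).
\]
Substituting the three relations above yields
\[
\eta(\widetilde{G}) \leq \bigl(|V(G)|-2\bigr) - 2\bigl(m(G)-1\bigr) + 2\bigl(c(G)-1\bigr) = |V(G)| - 2m(G) + 2c(G) - 2,
\]
which is the desired inequality.

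The only genuinely delicate point in the argument is justifying $c(G-x-y) \leq c(G)-1$: one must note that deleting the pendant vertex $x$ preserves the cyclomatic number and, crucially, preserves the property that $y$ lies on a cycle (since $x$ itself lies on no cycle), so that Lemma \ref{L23}(ii) can be legitimately applied with respect to $G-x$ rather than directly to $G$. Everything else is bookkeeping and an appeal to the additivity of Theorem \ref{mixed2} over components.
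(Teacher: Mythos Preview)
Your proof is correct and follows essentially the same approach as the paper: both arguments reduce via Lemma~\ref{L13}, track $|V|$, $m$, $c$ under the deletion of $x$ and $y$ using Lemmas~\ref{L19} and~\ref{L23}, and invoke the upper bound of Theorem~\ref{mixed2} on $\widetilde{G}-x-y$. The paper phrases this as a proof by contradiction while you argue directly; you are also slightly more explicit in justifying $c(G-x-y)\le c(G)-1$ in two steps and in noting that Theorem~\ref{mixed2} extends to disconnected graphs by additivity, but the substance is the same.
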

\begin{proof} Suppose to the contrary that $\eta(\widetilde{G}) > |V(G)|-2m(G)+2c(G)-2$. From Lemma \ref{mixed2}, one has that
$$\eta(\widetilde{G})= |V(G)|-2m(G)+2c(G)$$
or
$$\eta(\widetilde{G})= |V(G)|-2m(G)+2c(G)-1. $$

Since $y$ lies in some cycle of $\widetilde{G}$, by Lemmas \ref{L19} and \ref{L23}, we have
$$|V(G-x-y)|=|V(G)|-2,$$
$$m(G-x-y) =m(G)-1,$$
and
$$c(G-x-y) \leq c(G)-1.$$
By Lemma \ref{L13}, one has that
\begin{eqnarray*}
\eta(\widetilde{G}-x-y)
&=&\eta(\widetilde{G})\\
&\geq&|V(G)|-2m(G)+2c(G)-1\\
&\geq&|V(G-x-y)|+2-2[m(G-x-y)+1]+2c(G-x-y)+2-1\\
&\geq&|V(G-x-y)|-2m(G-x-y)+2c(G-x-y)+1.
\end{eqnarray*}
Which contradicts to Lemma \ref{mixed2}. Thus, for any mixed graph $\widetilde{G}$ with a quasi-pendant vertex $y$ lies in some cycle of $\widetilde{G}$, one has
$$\eta(\widetilde{G}) \leq |V(G)|-2m(G)+2c(G)-2.$$

This completes the proof.
\end{proof}

\begin{lemma}\label{L884}
Let $\widetilde{G}$ be a mixed graph without pendant vertices and $G \neq D$ (where $D$ is shown in Fig. 3.). If
$\eta(\widetilde{G}) \neq |V(G)|-2m(G)+2c(G)$ and $c(G) \geq 2$, then there exists a vertex $u$ in some cycle of $\widetilde{G}$ such that
$\eta(\widetilde{G}-u) \neq |V(G-u)|-2m(G-u)+2c(G-u)$.
\end{lemma}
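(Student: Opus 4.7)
I would argue by contradiction. Suppose that for every vertex $u$ lying on a cycle of $\widetilde{G}$, $\eta(\widetilde{G}-u) = |V(G-u)| - 2m(G-u) + 2c(G-u)$, so by Theorem \ref{mixed1} each $\widetilde{G}-u$ satisfies the three structural conditions (i)--(iii). The goal is to deduce from this that $\widetilde{G}$ itself satisfies (i)--(iii), which by Theorem \ref{mixed1} would force $\eta(\widetilde{G}) = |V(G)| - 2m(G) + 2c(G)$, contradicting the hypothesis.

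First I would show that all cycles of $\widetilde{G}$ are pairwise vertex-disjoint. Assuming two cycles $C_1, C_2$ share a vertex, the argument splits on $c(G)$. When $c(G) \geq 3$, there must exist a third independent cycle, and one can locate a cyclic vertex $u$ whose deletion preserves the sharing of $C_1$ and $C_2$ inside $\widetilde{G}-u$, contradicting (i) for $\widetilde{G}-u$. When $c(G) = 2$, the absence of pendant vertices forces $G$ to be $\theta$-$(p,l,q)$, $\infty$-$(p,1,q)$, or $D$. The hypothesis $G \neq D$ removes the last possibility, and for the first two I would apply Theorem \ref{UNMIX} to the unicyclic graph $\widetilde{G}-u$ obtained by removing an internal vertex of one of the defining paths, combined with Lemma \ref{L880}, to extract an explicit numerical contradiction with the attainment assumption.

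Second, assuming (i) now holds, I would verify condition (ii). If some cycle $\widetilde{C}$ of $\widetilde{G}$ violates the signature--parity condition, then because $c(G) \geq 2$ and the cycles are now vertex-disjoint one can pick $u$ on a different cycle. The cycle $\widetilde{C}$ survives in $\widetilde{G}-u$ with the same length and signature, so $\widetilde{G}-u$ violates (ii), contradicting our assumption.

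Third, I would establish condition (iii), $m(T_G) = m(G - O(G))$, by showing that (iii) for every $\widetilde{G}-u$ forces (iii) for $\widetilde{G}$. Using Lemma \ref{L17} and Lemma \ref{L19} to track matching numbers, Lemma \ref{L23} to track cyclomatic numbers, together with $|V(G-u)| = |V(G)|-1$, one relates $m(T_{G-u})$ and $m((G-u) - O(G-u))$ to $m(T_G)$ and $m(G - O(G))$. Any $M$-augmenting path witnessing failure of (iii) in $\widetilde{G}$ should lift (via Lemma \ref{MATCH}) to witness the failure of (iii) in $\widetilde{G}-u$ for a suitable choice of $u$ on a cycle, producing the final contradiction.

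The main obstacle is the bicyclic subcase in Step 1: for $\infty$-$(p,1,q)$ and $\theta$-$(p,l,q)$, removing any cyclic vertex produces either a unicyclic graph or a forest, so (i) is vacuously satisfied in $\widetilde{G}-u$ and the direct ``condition (i)'' contradiction is unavailable. This forces a direct Hermitian-rank computation, and it is precisely this computation that fails for $G = D$, explaining why that graph must be excluded by hypothesis. A secondary difficulty is matching the delicate conditions in Step 3, where one must carefully choose $u$ so that vertex deletion interacts cleanly with the contracted forest $T_G$ and with $O(G)$.
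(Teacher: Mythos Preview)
Your overall strategy---assume every cyclic deletion $\widetilde{G}-u$ attains the upper bound and deduce via Theorem~\ref{mixed1} that $\widetilde{G}$ does too---is exactly the contrapositive of the paper's approach, and Step~2 matches the paper verbatim. However, two parts of your outline contain genuine gaps that the paper resolves with substantially more work.

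In Step~1 for $c(G)\geq 3$, the claim ``there must exist a third independent cycle, and one can locate a cyclic vertex $u$ whose deletion preserves the sharing of $C_1$ and $C_2$'' is not true as stated: when all cycles of $G$ pass through $V(C_1)\cup V(C_2)$ (as in the tricyclic bases $T_5$--$T_8$ of Fig.~2), there is no cyclic vertex outside $C_1\cup C_2$ to delete. The paper handles this by a separate case analysis of those bases, checking in each that some vertex can be removed while two intersecting cycles survive. In the bicyclic subcase your plan to invoke Theorem~\ref{UNMIX} directly is a legitimate alternative to the paper's route, but note that for some choices of $u$ (e.g.\ the neighbour of the shared vertex in $\infty$-$(p,1,q)$ with $p,q$ even and good signatures) $\widetilde{G}-u$ \emph{does} attain the upper bound, so you must specify the right $u$; the paper instead chooses $u$ so that $\widetilde{G}-u$, possibly after a chain of PED reductions, acquires a quasi-pendant on a cycle and then invokes Lemma~\ref{L883}. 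Your reference to Lemma~\ref{L880} is misplaced: that lemma concerns only $D$, which is already excluded by hypothesis.

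Step~3 is the more serious gap. The sentence ``any $M$-augmenting path witnessing failure of (iii) in $\widetilde{G}$ should lift to witness the failure of (iii) in $\widetilde{G}-u$'' hides all the difficulty. The paper's argument is not a generic lift: it first observes that $m(T_G)>0$ forces $T_G$ to have a pendant edge, that pendants of $T_G$ correspond (since $G$ has no pendants) to pendant cycles of $G$, and then splits on whether every maximum matching of $T_G$ covers all pendant edges. In each subcase a \emph{specific} $u$ is chosen on a pendant cycle (adjacent to, or at distance~$2$ from, the degree-$3$ vertex) and one verifies directly that $m(T_{G-u})>m([T_{G-u}])$. Your outline gives no indication of how to select $u$ or why the inequality transfers, and a naive choice can fail. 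You should also note that the paper first disposes of the case $g(G)=3$ in one line (a triangle survives any single cyclic deletion, violating (ii)), which shortens the later analysis.
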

\begin{proof}  We will deal with the problem with two cases according to $g(G)$.

{\bf  Case 1.} $g(G) =3$.

Since $g(G) =3$, there exists a cycle of $\widetilde{G}$, denoted by $\widetilde{C_{q}}$, with length three.
Since $c(G) \geq 2$ and no pendant vertices in $\widetilde{G}$, there exists a vertex $u$ on some cycle in $\widetilde{G}$ such that $\widetilde{C_{q}}$ is mixed cycle of $\widetilde{G}-u$.
Which implies that $g(G-u)=3$, i.e., $\widetilde{G}-u$ does not satisfy the condition of Lemma \ref{mixed1}(ii). Thus, by Lemma \ref{mixed1}, one has
$$\eta(\widetilde{G}-u) \neq |V(G-u)|-2m(G-u)+2c(G-u).$$

{\bf  Case 2.} $g(G)  \geq 4$.

By Lemma \ref{mixed1} and $\eta(\widetilde{G}) \neq |V(G)|-2m(G)+2c(G)$, one has that $\widetilde{G}$ does not satisfy at least one of the three conditions in Lemma \ref{mixed1}.

{\bf  Subcase 2.1.} $\widetilde{G}$ does not satisfy Lemma \ref{mixed1}(i).

Note that $\widetilde{G}$ contains at least two vertex-joint cycles, denoted by $\widetilde{C_{k}}$ and $\widetilde{C_{s}}$ $(k, s \geq 4)$.
Let $\widetilde{G}[\widetilde{C_{k}}, \widetilde{C_{s}}]$ be the mixed graph induced by $\widetilde{C_{k}}$ and $\widetilde{C_{s}}$.

{\bf  Subcase 2.1.1.} $c(G)=2$.

Since $\widetilde{G}$ contains no pendant vertices, $\widetilde{G}$ is the union of an $\infty$-graph (or a $\theta$-graph) and some isolated vertices (if any), which implies that
$\widetilde{G}[\widetilde{C_{k}}, \widetilde{C_{s}}]$ is either $\infty$-$\widetilde{(p, l, q)}$ or $\theta$-$\widetilde{(p, l, q)}$.
Since $G \neq D$, $\widetilde{G}[\widetilde{C_{k}}, \widetilde{C_{s}}]$ is not $\theta$-$\widetilde{(1, 1, 1)}$.
As shown in Fig. 1., it can be checked that there exists a vertex $u$ in some cycle of $\widetilde{G}$ such that either $\widetilde{G}-u$ contains a quasi-pendant vertex lies in some cycle of $\widetilde{G}$ or
there exists a integer $t$ $(t \geq 0)$ such that $\widetilde{G_{t}}$ contains a quasi-pendant vertex lies in some cycle of $\widetilde{G}$
(Where $\widetilde{G_{0}}$ is obtained by deleting a pendant vertex and its quasi-pendant vertex of $\widetilde{G}-u$.
If $\widetilde{G_{0}}$ contains a quasi-pendant vertex lies in some cycle of $\widetilde{G}$, then $\widetilde{G_{0}}$
is as we required and we are done. Otherwise, a subgraph $\widetilde{G_{1}}$ of $\widetilde{G_{0}}$ can be obtained after deleting a pendant vertex and its quasi-pendant vertex of $\widetilde{G_{0}}$.
If $\widetilde{G_{1}}$ contains a quasi-pendant vertex lies in some cycle of $\widetilde{G}$,
then $\widetilde{G_{1}}$ is as we required and we are done. Otherwise, repeating the above steps until we obtain a mixed graph $\widetilde{G_{t}}$ such that $\widetilde{G_{t}}$ contains
a quasi-pendant vertex lies in some cycle of $\widetilde{G}$.).


If there exists a vertex $u$ in some cycle of $\widetilde{G}$ such that $\widetilde{G}-u$ contains a quasi-pendant vertex lies in some cycle of $\widetilde{G}$,
by Lemma \ref{L883}, one has
$$\eta(\widetilde{G}-u) \neq |V(G-u)|-2m(G-u)+2c(G-u).$$

If $\widetilde{G_{t}}$ contains a quasi-pendant vertex lies in some cycle of $\widetilde{G}$,
by Lemma \ref{L883}, we have
$$\eta(\widetilde{G_{t}}) \neq |V(G_{t})|-2m(G_{t})+2c(G_{t}).$$
Since $\widetilde{G_{t}}$ is obtained by deleting a series of pendant vertices and their quasi-pendant vertices of $\widetilde{G}-u$ and these quasi-pendant vertices lie in no cycle of $\widetilde{G}$,
by Lemma \ref{L771}, we have
$$\eta(\widetilde{G}-u) \neq |V(G-u)|-2m(G-u)+2c(G-u).$$

{\bf  Subcase 2.1.2.} $c(G) \geq 3$.

If there exists a vertex $u$ in the cycle of $\widetilde{G}$ such that $u \notin \widetilde{G}[\widetilde{C_{k}}, \widetilde{C_{s}}]$. Which implies that
$\widetilde{G}[\widetilde{C_{k}}, \widetilde{C_{s}}]$ is a subgraph of $\widetilde{G}-u$. Then $\widetilde{G}-u$ does not satisfy Lemma \ref{mixed1}(i), hence
$$\eta(\widetilde{G}-u) \neq |V(G-u)|-2m(G-u)+2c(G-u).$$
For example, as shown in Fig. 2., the mixed graphs with $T_{i}$ $(i=1, 2, 3, 4)$ as underlying graph, which contains a vertex $u$ on the cycle
and $u \notin \widetilde{G}[\widetilde{C_{k}}, \widetilde{C_{s}}]$.

Next, one can suppose $u \in \widetilde{G}[\widetilde{C_{k}}, \widetilde{C_{s}}]$ for a vertex $u$ in any cycle. It implies that for each vertex $u \notin \widetilde{G}[\widetilde{C_{k}}, \widetilde{C_{s}}]$,
$u$ is not in any cycle. That is any cycle of $\widetilde{G}$ is
the subgraph of $\widetilde{G}[\widetilde{C_{k}}, \widetilde{C_{s}}]$.
Since $c(G) \geq 3$, $\widetilde{G}$ contains one of the types $T_{j}$ for $j =5, 6, 7, 8$. As shown in Fig. 2, there exists a vertex $u$ of $T_{j}$ $(j =5, 6, 7, 8)$ such that $T_{j}-u$ also contains two vertex-joint cycles.
Hence there exists a vertex $u$ in the cycle of $\widetilde{G}$ such that
$\widetilde{G}-u$ does not does not satisfy Lemma \ref{mixed1}(i), thus
$$\eta(\widetilde{G}-u) \neq |V(G-u)|-2m(G-u)+2c(G-u).$$


{\bf  Subcase 2.2.} $\widetilde{G}$ does not satisfy Lemma \ref{mixed1}(ii) but satisfies Lemma \ref{mixed1}(i).

Then there exists at least one mixed cycle $\widetilde{C_{l}}$ is not even or $\widetilde{C_{l}}$ is even but $\sigma(\widetilde{C_{l}}) \not\equiv   l \, (\rm{mod} \ 4)$.
Since $c(G) \geq 2$ and $\widetilde{G}$ satisfies Lemma \ref{mixed1}(i), there exists a vertex $u$ in another cycle of $\widetilde{G}$ such that $\widetilde{G}-u$ does not satisfy Lemma \ref{mixed1}(ii).
By Lemma \ref{mixed1}, we have
$$\eta(\widetilde{G}-u) \neq |V(G-u)|-2m(G-u)+2c(G-u).$$

{\bf  Subcase 2.3.} $\widetilde{G}$ does not satisfy Lemma \ref{mixed1}(iii) but satisfies (1) and (ii) of Lemma \ref{mixed1}.

By the fact that $m(T_{G}) \geq m([T_{G}])$ and $m(T_{G}) \neq m([T_{G}])$, then $m(T_{G}) > m([T_{G}]) \geq 0$.
If $E(T_{G})=\emptyset$, then $\widetilde{G}$ is the union of some vertex-disjoint mixed cycles and isolated vertices and $m(T_{G})=0$. Which contradicts to $m(T_{G}) >  0$.
Therefore, one can suppose $E(T_{G}) \neq \emptyset$ in the following. In $T_{G}$, for every maximum matching $M$ of $T_{G}$, $M$ must contain at least one pendent edge of $T_{G}$.
Otherwise, one can find an $M$-augmenting path in $T_{G}$, which contradicts to Lemma \ref{MATCH}.
Let $x$ be a pendent vertex of $T_{G}$. Since $\widetilde{G}$ has no pendant vertex, $x \in W_{G}$. It can be checked that the cycle in $\widetilde{G}$ corresponding
to $x$ in $T_{G}$ is a pendant cycle, denoted by $\widetilde{C_{q}}$. Let $y$ be the unique vertex with degree three in $\widetilde{C_{q}}$ and $u$ be a vertex in the cycle $\widetilde{C_{q}}$.
Then $T_{G-u}$ is the graph obtained from $T_{G}$ and $\widetilde{C_{q}}-u$ by identifying $x$ and $y$ as one vertex. The following two
subcases can be identified for this case.

{\bf  Subcase 2.3.1.} Every maximum matching of $T_{G}$ cover all pendant edges of $T_{G}$.

One can suppose that $u$ be one of two vertices of $\widetilde{C_{q}}$ such that $u$ is adjacent to $y$. Note that $\widetilde{C_{q}}$ is an even cycle,
then the length of $\widetilde{C_{q}}-u-y$ is odd and $\widetilde{C_{q}}-u-y$ has a
perfect matching. By the definition of $T_{G}$, one has that the maximum matching of $T_{G-u}$ is the union of the maximum matching of $T_{G}$ and the maximum matching of $\widetilde{C_{q}}-u-y$. Then, one has
$$m(T_{G-u})=m(T_{G})+m(\widetilde{C_{q}}-u-y).$$

Hence every maximum matching of $T_{G-u}$ must covers some vertex in $W_{G-u}$. Then $m(T_{G-u}) > m([T_{G-u}])$.
By Lemma \ref{mixed1}(iii), we have
$$\eta(\widetilde{G}-u) \neq |V(G-u)|-2m(G-u)+2c(G-u).$$

{\bf  Subcase 2.3.2.} There exist some pendant edge, say $wt$, and some maximum matching, say $M(T_{G})$, of $T_{G}$ such
that $wt \notin M(T_{G})$ and $t$ is a pendant vertex of $T_{G}$.

Let $u'$ be a vertex of $\widetilde{C_{p}}$ such that $d(t_{0}, u') = 2$, where the cycle $\widetilde{C_{p}}$ of $\widetilde{G}$ corresponding to the pendant vertex $t$ of $T_{G}$, and $t_{0}$
is the unique vertex with degree three in $\widetilde{C_{p}}$. By the definition of $T_{G-u'}$, the maximum matching of $T_{G-u'}$ is the union of $M(T_{G})$ and the maximum matching of $\widetilde{C_{p}}-u'$.
Then, one has
$$m(T_{G-u'})=m(T_{G})+m(\widetilde{C_{p}}-u').$$

By Lemma \ref{MATCH}, $M(T_{G})$ must cover some vertices in $W_{G-u'}$. Then each maximum matching of $T_{G-u'}$ must cover some vertices in $W_{G-u'}$, i.e.,
$$m(T_{G-u'})\neq m([T_{G-u'}]).$$
By Lemma \ref{mixed1}(iii), one has
$$\eta(\widetilde{G}-u') \neq |V(G-u')|-2m(G-u')+2c(G-u').$$

This completes the proof.
\end{proof}

\begin{lemma}\label{L885}
Let $\widetilde{G}$ be a mixed graph without pendant vertices. Then
$$\eta(\widetilde{G}) \neq |V(G)|-2m(G)+2c(G)-1.$$
\end{lemma}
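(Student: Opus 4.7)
The plan is to prove Lemma \ref{L885} by induction on $|V(G)|$. For the base and low-cycle cases, note first that if $c(G)=0$ then the pendant-free hypothesis forces $\widetilde{G}$ to be an empty graph, so $\eta(\widetilde{G})=|V(G)|\neq|V(G)|-1$. If $c(G)=1$, then $\widetilde{G}$ is a disjoint union of one mixed cycle with (possibly zero) isolated vertices; splitting via Lemma \ref{L16}(ii) and applying Corollary \ref{UNM} to the cyclic component rules out the target identity. Finally, if $G\cong D$ (where $c(G)=2$), Lemma \ref{L880} directly rules out the nullity value in question.

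For the inductive step, assume $c(G)\geq 2$ and $G\neq D$, and suppose toward a contradiction that $\eta(\widetilde{G})=|V(G)|-2m(G)+2c(G)-1$. Lemma \ref{L884} supplies a vertex $u$ on a cycle of $\widetilde{G}$ with $\eta(\widetilde{G}-u)\neq|V(G-u)|-2m(G-u)+2c(G-u)$. Lemma \ref{L0001} then gives $\eta(\widetilde{G}-u)\geq\eta(\widetilde{G})-1=|V(G)|-2m(G)+2c(G)-2$, while $|V(G-u)|=|V(G)|-1$ together with Lemmas \ref{L17} and \ref{L23} (using that $u$ lies on a cycle) yield $|V(G-u)|-2m(G-u)+2c(G-u)\leq|V(G)|-2m(G)+2c(G)-1$. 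Combining these with the upper bound in Theorem \ref{mixed2} and the strict inequality from Lemma \ref{L884}, an arithmetic squeeze forces equality throughout: $m(G-u)=m(G)-1$, $c(G-u)=c(G)-1$, and $\eta(\widetilde{G}-u)=|V(G-u)|-2m(G-u)+2c(G-u)-1$.

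It remains to iterate PED on $\widetilde{G}-u$: while the current graph has a pendant whose quasi-pendant lies off every cycle, Lemma \ref{L881} with $s=1$ applies and preserves the identity $\eta=|V|-2m+2c-1$ while strictly decreasing the vertex count by $2$. The procedure terminates at a mixed graph $\widetilde{G'}$ in one of two states: either (a) $\widetilde{G'}$ has a pendant whose quasi-pendant sits on some cycle, in which case Lemma \ref{L883} forces $\eta(\widetilde{G'})\leq|V(G')|-2m(G')+2c(G')-2$, contradicting the preserved identity; or (b) $\widetilde{G'}$ is pendant-free with $|V(G')|<|V(G)|$, in which case the induction hypothesis, applied to $\widetilde{G'}$ together with the preserved identity $\eta(\widetilde{G'})=|V(G')|-2m(G')+2c(G')-1$, delivers the final contradiction. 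The main obstacle is the arithmetic squeeze in the middle paragraph: all four inequalities (from Lemmas \ref{L0001}, \ref{L17}, \ref{L23}, and Theorem \ref{mixed2}) must be jointly tight under the hypotheses, and only then does $\widetilde{G}-u$ inherit an identity of exactly the same shape, which is what powers the induction.
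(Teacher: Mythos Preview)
Your proof is correct and uses the same toolkit as the paper (Lemmas \ref{L880}, \ref{L771}/\ref{L881}, \ref{L883}, \ref{L884}, \ref{L0001}, \ref{L17}, \ref{L23}, Theorem \ref{mixed2}, Corollary \ref{UNM}), but the organisation is genuinely different and, in fact, tighter. The paper inducts on $c(G)$ and, for the inductive step, considers \emph{every} vertex $x$ on a cycle, derives $\eta(\widetilde{H}-x)=|V(H-x)|-2m(H-x)+2c(H-x)-s$ with $s\in\{0,1\}$, and then runs a four-way case split on the pendant structure of $\widetilde{H}-x$ (each case using PED and the induction hypothesis on $c$ to exclude $s=1$); only at the very end is Lemma \ref{L884} invoked to produce a vertex violating $s=0$. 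You instead induct on $|V(G)|$ and invoke Lemma \ref{L884} \emph{first} to single out one vertex $u$ with $s\neq 0$; the arithmetic squeeze then pins $s=1$ and forces $m(G-u)=m(G)-1$, $c(G-u)=c(G)-1$, so $\widetilde{G}-u$ itself satisfies the forbidden identity. This collapses the paper's four cases into a single PED iteration terminating in either Lemma \ref{L883} or the induction hypothesis on $|V|$. The payoff is a more linear argument with no quantifier juggling over ``all $x$ on a cycle''. One small point worth making explicit: Theorem \ref{mixed2} is stated for connected graphs, while $\widetilde{G}-u$ and the PED iterates need not be connected; the upper bound $\eta\le |V|-2m+2c$ extends componentwise via Lemma \ref{L16}(ii), so this is harmless, but a one-line remark would close that gap.
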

\begin{proof} If $G \cong D$ where $D$ is shown in Fig. 3., the result holds immediately from Lemma \ref{L880}. Then one can suppose that $G \not\cong D$ in the following.
We argue by induction on $c(G)$ to show the result.

If $c(G)=0$, then $\widetilde{G}$ is a forest. By Lemma \ref{L131}, $\eta(\widetilde{G}) = |V(G)|-2m(G)$, thus
$$\eta(\widetilde{G}) \neq |V(G)|-2m(G)+2c(G)-1.$$

If $c(G)=1$, then $\widetilde{G}$ is mixed unicyclic graph. By Lemma \ref{UNM}, one has that
$$\eta(\widetilde{G}) \neq |V(G)|-2m(G)+2c(G)-1.$$

Therefore one can assume that $c(G) \geq 2$ and the conclusion is true for $c(G) \leq k$. Next, we just need to prove the result is true for $c(G) = k+1$.
Suppose on the contrary, there exists some mixed graph $\widetilde{H}$ without pendant vertices such that $c(H) = k+1$ and $\eta(\widetilde{H}) = |V(H)|-2m(H)+2c(H)-1.$

Let $x$ be any vertex on some cycle of $\widetilde{H}$. For the mixed graph $\widetilde{H}-x$, by Lemmas \ref{L17} and \ref{L23}, we have
$$m(H) \leq m(H-x)+1,$$
$$c(H) \geq c(H-x)+1,$$
and
$$|V(H)|=|V(H-x)|+1.$$
By Lemma \ref{L0001}, one has
\begin{eqnarray*}
\eta(\widetilde{H}-x)+1
&\geq&\eta(\widetilde{H})\\
&=&|V(H)|-2m(H)+2c(H)-1\\
&\geq&|V(H-x)|+1-2[m(H-x)+1]+2[c(H-x)+1]-1\\
&=&|V(H-x)|-2m(H-x)+2c(H-x).
\end{eqnarray*}
By Lemma \ref{mixed1}, for any vertex $x$ in some cycle $\widetilde{H}$, one has
\begin{equation} \label{E1}
\eta(\widetilde{H}-x)=|V(H-x)|-2m(G-x)+2c(G-x)-s, s=0,1.
\end{equation}

We will deal with the problem with two subcases according to the pendant vertices of $\widetilde{G}$.

{\bf  Case 1.} $\widetilde{H}-x$ contains no pendant vertex.

Since $c(H-x)\leq c(H)-1=k$, by the induction hypothesis,
$$\eta(\widetilde{H}-x) \neq |V(H-x)|-2m(H-x)+2c(H-x) -1.$$
Then, from (\ref{E1}), one has that
\begin{equation} \label{E2}
\eta(\widetilde{H}-x) = |V(H-x)|-2m(H-x)+2c(H-x)
\end{equation}
holds for any vertex $x$ in the cycle of $\widetilde{H}$.

On the other hand, since $\eta(\widetilde{H}) = |V(H)|-2m(H)+2c(H)-1$, i.e., $\eta(\widetilde{H}) \neq |V(H)|-2m(H)+2c(H)$, by Lemma \ref{L884},
there exists a vertex $v$ in the cycle of $\widetilde{H}$, such that
$$\eta(\widetilde{H}-v) \neq |V(H-v)|-2m(H-v)+2c(H-v),$$
which contradicts to (\ref{E2}).

{\bf  Case 2.} $\widetilde{H}-x$ contains some pendant vertices.

{\bf  Subcase 2.1.} $\widetilde{H}-x$ contains at least one pendant vertex whose quasi-pendant vertex lies in some cycle of $\widetilde{H}-x$.

By Lemma \ref{L883}, for any vertex $x$ in the cycle of $\widetilde{H}$, we have
$$\eta(\widetilde{H}-x) \leq  |V(H-x)|-2m(H-x)+2c(H-x)-2,$$
which contradicts to (\ref{E1}).

{\bf  Subcase 2.2.} All the quasi-pendant vertices of $\widetilde{H}-x$ lie in no cycle of $\widetilde{H}-x$.

A subgraph $\widetilde{H_{1}}$ of $\widetilde{H}-x$ can be obtained after deleting all the pendant vertices and their quasi-pendant vertices of $\widetilde{H}-x$.
If $\widetilde{H_{1}}$ contains no pendant vertices or at least a pendant vertex whose quasi-pendant vertex lies in some cycle, then $\widetilde{H_{1}}$
is as we required and we are done. Otherwise, a subgraph $\widetilde{H_{2}}$ of $\widetilde{H_{1}}$ can be obtained after deleting all the pendant vertices and their quasi-pendant vertices of $\widetilde{H_{1}}$.
If $\widetilde{H_{2}}$ contains no pendant vertices or at least a pendant vertex whose quasi-pendant vertex lies in some cycle,
then $\widetilde{H_{2}}$ is as we required and we are done. Otherwise, repeating the above steps until we obtain a mixed graph $\widetilde{H_{0}}$ that meets the requirements.

{\bf  Subcase 2.2.1.} $\widetilde{H_{0}}$ contains no pendant vertices.

Since $c(H_{0})=c(H-x)\leq c(H)-1=k$, by the induction hypothesis,
$$\eta(\widetilde{H_{0}}) \neq |V(H_{0})|-2m(H_{0})+2c(H_{0})-1.$$
Since $\widetilde{H_{0}}$ is obtained from $\widetilde{H}-x$ by removing a series pendant vertices whose quasi-pendant vertices do not lie in any cycle and their quasi-pendant vertices, by Lemma \ref{L771}, one has
$$\eta(\widetilde{H}-x) \neq |V(H-x)|-2m(H-x)+2c(H-x)-1.$$

From (\ref{E1}), one has
\begin{equation} \label{E3}
\eta(\widetilde{H}-x) = |V(H-x)|-2m(H-x)+2c(H-x)
\end{equation}
holds for any vertex $x$ in the cycle of $\widetilde{H}$.

On the other hand, since $\eta(\widetilde{H}) = |V(H)|-2m(H)+2c(H)-1$, i.e., $\eta(\widetilde{H}) \neq |V(H)|-2m(H)+2c(H)$, by Lemma \ref{L884},
there exists a vertex $u$ in the cycle of $\widetilde{H}$, such that
$$\eta(\widetilde{H}-u) \neq |V(H-u)|-2m(H-u)+2c(H-u),$$
which contradicts to (\ref{E3}).

{\bf  Subcase 2.2.2.} $\widetilde{H_{0}}$ contains at least one pendant vertex whose quasi-pendant vertex lies in some cycle.

By Lemma \ref{L883}, we have
$$\eta(\widetilde{H_{0}}) \leq  |V(H_{0})|-2m(H_{0})+2c(H_{0})-2.$$
Then,
$$\eta(\widetilde{H_{0}}) \neq |V(H_{0})|-2m(H_{0})+2c(H_{0})-1.$$
Since $\widetilde{H_{0}}$ is obtained from $\widetilde{H}-x$ by removing a series pendant vertices (whose quasi-pendant vertices do not lie in any cycle) and their quasi-pendant vertices, by Lemma \ref{L771}, one has
$$\eta(\widetilde{H}-x) \neq |V(G-x)|-2m(G-x)+2c(G-x)-1.$$
Then, from (\ref{E1}), one has
\begin{equation} \label{E4}
\eta(\widetilde{H}-x) = |V(H-x)|-2m(H-x)+2c(H-x)
\end{equation}
holds for any vertex $x$ in the cycle of $\widetilde{H}$.

On the other hand, since $\eta(\widetilde{H}) = |V(H)|-2m(H)+2c(H)-1$, i.e., $\eta(\widetilde{H}) \neq |V(H)|-2m(H)+2c(H)$, by Lemma \ref{L884},
there exists a vertex $u$ in the cycle of $\widetilde{H}$, such that
$$\eta(\widetilde{H}-u) \neq |V(H-u)|-2m(H-u)+2c(H-u),$$
which contradicts to (\ref{E4}).

This completes the proof.
\end{proof}


Now, we give the proof of the main result of this section.

{\bf The proof of Theorem \ref{T1}.}
If $\widetilde{G}$ is an acyclic mixed graph, by Lemma \ref{L131}, the result follows. In the following, we suppose that $\widetilde{G}$ contains at least one cycle.

{\bf  Case 1.} $\widetilde{G}$ has no pendant vertices.

The result can be obtained from Lemma \ref{L885} immediately.

{\bf  Case 2.} $\widetilde{G}$ has some pendant vertices.

We will deal with the problem with two subcases according to the quasi-pendant vertices of $\widetilde{G}$.

{\bf  Subcase 2.1.} All quasi-pendant vertices of $\widetilde{G}$ do not lie in any cycle.

A subgraph $\widetilde{G_{1}}$ of $\widetilde{G}$ can be obtained after deleting all the pendant vertices and their quasi-pendant vertices of $\widetilde{G}$.
If $\widetilde{G_{1}}$ contains no pendant vertices or at least a pendant vertex whose quasi-pendant vertex lies in some cycle, then $\widetilde{G_{1}}$
is as we required and we are done. Otherwise, a subgraph $\widetilde{G_{2}}$ of $\widetilde{G_{1}}$ can be obtained after deleting all the pendant vertices and their quasi-pendant vertices of $\widetilde{G_{1}}$.
If $\widetilde{G_{2}}$ contains no pendant vertices or at least a pendant vertex whose quasi-pendant vertex lies in some cycle,
then $\widetilde{G_{2}}$ is as we required and we are done. Otherwise, repeating the above steps until we obtain a mixed graph $\widetilde{G_{0}}$ that meets the requirements.

{\bf  Subcase 2.1.1.} $\widetilde{G_{0}}$ contains no pendant vertices.

By Lemma \ref{L885}, we have
$$\eta(\widetilde{G_{0}}) \neq |V(G_{0})|-2m(G_{0})+2c(G_{0})-1.$$
Since $\widetilde{G_{0}}$ is obtained from $\widetilde{G}$ by removing a series pendant vertices whose quasi-pendant vertices do not lie in any cycle and their quasi-pendant vertices, by Lemma \ref{L771}, one has
$$\eta(\widetilde{G}) \neq |V(G)|-2m(G)+2c(G)-1.$$

{\bf  Subcase 2.1.2.} $\widetilde{G_{0}}$ contains at least one pendant vertex whose quasi-pendant vertex lies in some cycle.

By Lemma \ref{L883}, we have
$$\eta(\widetilde{G_{0}}) \leq  |V(G_{0})|-2m(G_{0})+2c(G_{0})-2.$$
Then,
$$\eta(\widetilde{G_{0}}) \neq |V(G_{0})|-2m(G_{0})+2c(G_{0})-1.$$
Since $\widetilde{G_{0}}$ is obtained from $\widetilde{G}$ by removing a series pendant vertices (whose quasi-pendant vertices do not lie in any cycle) and their quasi-pendant vertices, by Lemma \ref{L771}, one has
$$\eta(\widetilde{G}) \neq |V(G)|-2m(G)+2c(G)-1.$$

{\bf  Subcase 2.2.} There exists at least one pendant vertex of $\widetilde{G}$ whose quasi-pendant vertex lies in some cycle.

By  Lemma \ref{L883}, we have
$$\eta(\widetilde{G}) \neq |V(G)|-2m(G)+2c(G)-1.$$

This completes the proof of Theorem \ref{T1}.\hspace{8cm}$\square$

\section{Proof of Theorem \ref{T2}.}

In this section, the proof of Theorem \ref{T2} is presented. For a fixed integer $c(G)$, some mixed graphs are constructed to show that there are infinitely many connected mixed graphs with the nullity $\eta(\widetilde{G})=|V(G)|-2m(G)+2c(G)-k$, where $0 \leq k \leq 3c(G)$ and $k \neq 1$.

{\bf The proof of Theorem \ref{T2}.}
Let $s_{1}, s_{2}$ and $s_{3}$ be any integers with $s_{1}+s_{2}+s_{3}=c(G)$.
Let $\widetilde{K}_{1, c(G)+1}$ be a mixed star, $u$ be the center vertex of $\widetilde{K}_{1, c(G)+1}$ and $v_{1}, v_{2}, \cdots, v_{c(G)+1}$ be the pendant
vertices of $\widetilde{K}_{1, c(G)+1}$, respectively. Let $\widetilde{C^{i}_{3}} $ be a mixed cycle with size 3 and $\sigma(\widetilde{C^{i}_{3}})$ is even for $1 \leq i \leq s_{1} $,
$\widetilde{C^{j}_{4}} $ be a mixed cycle with size 4 and $\sigma(\widetilde{C^{j}_{4}}) \equiv 0 \ (\rm{mod} \ 4) $ for $1 \leq j \leq s_{2} $, and $\widetilde{H_{l}} $ be any mixed graph with the underlying graph
$H_{l}$ obtained from a cycle of size 4 by attaching one pendant edge for $1 \leq l \leq s_{3} $.
Let  $\widetilde{G}$ be the mixed graph which constructed by identifying $v_{i}$ with a vertex of $\widetilde{C^{i}_{3}} $, $v_{j+s_{1}}$ with a vertex of $\widetilde{C^{j}_{4}} $,
$v_{l+s_{1}+s_{2}}$ with a pendant vertex of $\widetilde{H_{l}} $, where $i=1, 2, \cdots, s_{1}$, $j=1, 2, \cdots, s_{2}$ and $l=1, 2, \cdots, s_{3}$.
Then
$$V(G)=3s_{1}+4s_{2}+5s_{3}+2,$$
$$m(G)=s_{1}+2s_{2}+2s_{3}+1,$$
$$c(G)=s_{1}+s_{2}+s_{3}.$$

By Lemma \ref{L25}, one has $\eta(\widetilde{C^{i}_{3}})=0$ and $\eta(\widetilde{C^{j}_{4}})=2$ for $1 \leq i \leq s_{1} $ and $1 \leq j \leq s_{2} $.
It can be checked that the result graph of $\widetilde{H_{l}} $ by deleting the unique pendant vertex and its quasi-pendant vertex is a mixed star $\widetilde{K}_{1, 2}$.
By Lemma \ref{L13} and $\eta(\widetilde{K}_{1, 2})=1$, we have $\eta(\widetilde{H_{l}} )=1$ for $1 \leq l \leq s_{3} $.
Then, by Lemma \ref{L13}, one has
\begin{eqnarray*}
\eta(\widetilde{G})
&=&\eta(\widetilde{G}-u-v_{c(G)+1})\\
&=&2s_{2}+s_{3}\\
&=&|V(G)|-2m(G)+2c(G)-(3s_{1}+2s_{3}).
\end{eqnarray*}
Since $s_{i}\geq 0$,  $3s_{1}+2s_{3}$ can take over every integer from zero to $3c(G)$ except for one.

This completes the proof of Theorem \ref{T2}.\hspace{8cm}$\square$


\section*{Acknowledgments}

This research is supported by National Natural Science Foundation of China (Nos.11971054, 11731002, 11771039, 11771443).


\begin{thebibliography}{99}

\vspace{-7pt}\bibitem{MATCH} J.A. Bondy, U.S.R. Murty, Graph Theory with Applications, Elsevier, New York, 1976.



\vspace{-7pt}\bibitem{BIC} J. Zhang, X.D. Zhang, The signless Laplacian coefficients and incidence energy of bicyclic graphs, Linear Algebra Appl. 439 (2013) 3859--3869.

\vspace{-7pt}\bibitem{TRIC} X. Geng, S. Li, The spectral radius of tricyclic graphs with $n$ vertices and $k$ pendant vertices, Linear Algebra Appl. 428 (11-12) (2008) 2639--2653.



\vspace{-7pt}\bibitem{LXL} J. Liu, X. Li, Hermitian-adjacency matrices and hermitian energies of mixed graphs, Linear Algebra Appl. 466 (2015) 182--207.


\vspace{-7pt}\bibitem{MOHAR} K. Guo, B. Mohar, Hermitian adjacency matrix of digraphs and mixed graphs, J. Graph Theory. 85 (1) (2017) 217--248.



\vspace{-7pt}\bibitem{WANGLONG}
L. Wang, D. Wong, Bounds for the matching number, the edge chromatic number and the independence number of a graph in terms of rank, Discrete Appl. Math.  166 (2014) 276--281.

\vspace{-7pt}\bibitem{WANGL}
L. Wang, Characterization of graphs with given order, given size and given matching number that minimize nullity, Discrete Math. 339 (5) (2016) 1574--1582.




\vspace{-7pt}\bibitem{SONG}
Y. Song, X. Song, B. S. Tam, A characterization of graphs $G$ with nullity $n-2m(G)+2c$, Linear Algebra Appl. 465 (2015) 363--375.






\vspace{-7pt}\bibitem{LSC} C. Chen, J. Huang, S. Li, On the relation between the $H$-rank of a mixed graph and the matching number of its underlying graph, Linear Multilinear Algebra. 66 (9) (2018) 1853--1869.

\vspace{-7pt}\bibitem{TFL}
F. Tian, L. Chen, R. Chu, Rank of the Hermitian adjacency matrix of a mixed graph in terms of matching number, Ars Combin. 137 (2018) 221--232.





\vspace{-7pt}\bibitem{MOHAR2} B. Mohar, Hermitian adjacency spectrum and switching equivalence of mixed graphs, Linear Algebra Appl. 489 (2016) 324--340.

\vspace{-7pt}\bibitem{YBJ} Y. Wang, B. Yuan, S. Li, Mixed graphs with $H$-rank 3, Linear Algebra Appl. 524 (2017) 22--34.



\vspace{-7pt}\bibitem{YJL} J. Yang, L. Wang, X. Yang, Some mixed graphs with $H$-rank 4, 6 or 8, J. Comb. Optim. 41 (2021) 678--693.



\vspace{-7pt}\bibitem{LG} X. Li, J. Guo, No graph with nullity $\eta(G) = |V (G)|-2m(G) + 2c(G)-1$, Discrete Appl. Math. 268 (2019) 130--136.


\vspace{-7pt}\bibitem{LUWU} Y. Lu, J. Wu, No signed graph with nullity $\eta(G, \sigma) = |V (G)|-2m(G)+2c(G)-1$, Linear Algebra Appl. 615 (2021) 175--193.








\vspace{-7pt}\bibitem{GUT}
I. Gutman, I. Sciriha, On the nullity of line graphs of trees, Discrete Math. 232 (2001) 35--45.








\vspace{-7pt}\bibitem{HLSC}
J. Huang, S. Li, H. Wang, Relation between the skew-rank of an oriented graph and the
independence number of its underlying graph, J. Comb. Optim. 36 (2018) 65--80.









\vspace{-7pt}\bibitem{WDY}
X. Ma, D. Wong, F. Tian, Skew-rank of an oriented graph in terms of matching number,
Linear Algebra Appl. 495 (2016) 242--255.


\vspace{-7pt}\bibitem{MAH2} X. Ma, D. Wong, F. Tian, Nullity of a graph in terms of the dimension of cycle space and the number of pendant vertices, Discrete Appl. Math. 215 (2016) 171--176.

















\vspace{-7pt}\bibitem{WONGEUR}
D. Wong, X. Ma, F. Tian, Relation between the skew-rank of an oriented graph and the rank
of its underlying graph, European J. Combin.  54 (2016) 76--86.







\vspace{-7pt} \bibitem{BEVI}
J. Bevis, K. Blount, G. Davis, The rank of a graph after vertex addition, Linear Algebra Appl.
265 (1997) 55--69.




\vspace{-7pt} \bibitem{HSJ}S. H, R.-X. Hao, A. Yu, On the inertia index of a mixed graph in terms of the matching number, arXiv:1909.07146 [math.CO].





\end{thebibliography}
\end{document}